\newtheorem{theorem}{Theorem}[section]
\newtheorem{lemma}[theorem]{Lemma}
\theoremstyle{definition}
\newtheorem{definition}[theorem]{Definition}
\theoremstyle{remark}
\newtheorem{remark}[theorem]{Remark}
\numberwithin{equation}{section}
\newcommand{\Sc}{\mathcal{S}}
\newcommand{\Rc}{\mathcal{R}}
\newcommand{\vu}{\vec{u}}
\newcommand{\sym}{\text{Sym}}
\newcommand{\de}{\partial}
\DeclareMathOperator{\im}{\mathfrak{Im}}
\title{Planarity criteria for metric graphs}
\author{Alice Brolin}
\author{Pavel Kurasov}
\address{Dept. of Mathematics, Stockholm Univ., 106 91 Stockholm Sweden}
\email{alice.brolin@math.su.se}
\address{Dept. of Mathematics, Stockholm Univ., 106 91 Stockholm Sweden}
\email{kurasov@math.su.se} 
\thanks{This work was partially supported by the Swedish Research Council Grants 2020-03780 and 2024-04650.}
\subjclass{ 34L05, 35P05, 81Q10}
\keywords{Metric graphs, Colin de Verdi\`ere number, planarity, delta couplings}
\dedicatory{In memory of Heinz Langer\\
who taught us not only mathematics}
\begin{document}

\maketitle

\begin{abstract}
The Colin de Verdi\`ere parameter is a number assigned to  discrete graphs which equals the maximal multiplicity of the second eigenvalue of a certain family of 
Laplacian matrices related to the graph. In this paper it is shown that the Colin de Verdi\`ere parameter can be obtained in the setting of metric graphs by looking 
at the maximal multiplicity of the second eigenvalue for Laplacians on metric graphs with delta couplings at the vertices. 
Two different families of Laplacians, as well as a family  of Schr\"odinger operators, all leading to the Colin de Verdi\`ere number are presented.
\end{abstract}

\section{Introduction}

The Colin de Verdi\`ere graph parameter among other things describes whether a 
discrete graph is planar or not. 
It generalises the classical result by Kuratowski \cite{Kuratowski}, but provides a new insight on the problem of a graph's planarity.
The parameter is defined as the maximal multiplicity of the second eigenvalue for a family of  certain Hermitian matrices associated with  the graph. 
With every discrete graph one may naturally associate a metric graph by assigning lengths to the edges.
We show that the Colin de Verdi\`ere 
parameter can also be described as the maximal multiplicity of the second eigenvalue for different families of 
Schr\"odinger operators acting 
on metric graphs. In constrast to discrete graphs, Schr\"odinger operators on metric graphs are determined by many parameters:
lengths of the edges, vertex conditions and potentials on the edges \cite{PK24,BeKu}.
In order to obtain a planarity criterion these parameters should satisfy certain additional assumptions, like the matrix $ \mathbf A $ in
Colin de Verdi\`ere original approach which should have negative entries outside the diagonal (see Section \ref{SecCdV}).
It is not  {\it a priori} clear what such additional assumptions should be  for metric graphs.
Moreover, optimisation problems become easier when the number of parameters is reduced; therefore we shall look for
minimal families first. These three alternative families are described in Section 4, 7, and 8.
These families are constructed in such a way that the corresponding secular matrix $ \mathbf Q(\lambda_2) $ (see (\ref{qq}))
just coincides with the Colin de Verdi\`ere matrix $ \mathbf A$. Suggested families of operators on metric graphs provide a nice geometric interpretation for the Colin de Verdi\`ere matrix,
especially 
 of the Strong Arnold Property (SAP), which is imposed on the  matrices
(see Section \ref{SecMG}).

Our studies concern differential operators on metric graphs, but our analysis provides a new insight
on the original Colin de Verdi\`ere construction. Let us first briefly mention this construction (for description see Section \ref{SecCdV}).

Given a (discrete) graph $ G =  (V,E) $ the Colin de Verdi\`ere parameter $\mu(G)$ is the maximal multiplicity of the second eigenvalue for a 
set of symmetric $|V| \times |V|$ matrices associated with the graph, called Colin de Verdi\`ere matrices (see Definition \ref{def1}). These matrices $\mathbf A$ are defined so
that their non-diagonal entries are strictly negative if there is an edge between the vertices and are equal to zero otherwise. Diagonal entries are arbitrary (real).
Moreover the matrices are required to have exactly one negative eigenvalue and possess the Strong Arnold Property (see Section \ref{SecCdV}).

Two fundamental ideas hidden behind the choice of the matrix $ \mathbf A $ make  this construction possible:
\begin{itemize}
\item the matrix $ \mathbf{A} $ associated with the graph $ G $ is chosen with nonpositive entries outside the diagonal 
in order to guarantee that the corresponding semigroup is positivity
preserving and Perron-Frobenius theorem holds, {\it i.e.} the ground state can be chosen positive  (see {\it e.g.} \cite{BerPle}
and \cite[Theorem 7.1]{Bat});
\item the proof of the planarity criterion is based on a sophisticated analysis of nodal domains corresponding to the eigenvectors associated with the second eigenvalue, which is possible only because the ground state is sign definite.
\end{itemize}

The Colin de Verdi\`ere parameter provides information about the planarity  of the discrete graph $G$ (see Theorem \ref{ThCdV}).

Let us discuss now how these ideas can be generalised for
differential operators on metric graphs.  
By metric graphs we understand finite collections of compact intervals (subsets of $ \mathbb R $)
with some sets of end points identified, thus forming vertices (see Section \ref{SecMG}).
Then the (self-adjoint)
Scrhr\"odinger operator $ - \frac{d^2}{dx^2} + q(x) $ acts on the functions defined on the edges
and satisfied certain conditions at the vertices
(see Section \ref{SecMG} where the double role of vertex conditions is explained).
Considered metric graph are compact, so the
spectrum of the operator is discrete and the eigenvalues tend to $ + \infty $.
Spectral theory of differential operators on
metric graphs can be seen as a natural generalisation of the spectral theory of discrete graphs allowing 
more rich structure of the spectrum.
 It is therefore tempting to investigate whether classical analysis of
Colin de Verdi\`ere can be  generalised to include metric graphs. 
It is clear that discussing such generalisation the
fundamental principles formulated above should be taken into account. 

Non-degeneracy and positivity of the ground state for
(unbounded) differential operators is guaranteed by the
positivity preserving property of the corresponding semigroups as described by classical
Beurling-Deny criterion \cite{BeDe,ReSi4}
This connection for Schr\"odinger operators on metric graphs
has been discussed in detail  in \cite{Ku19}, see also \cite[Section 4.5]{PK24}.
In particular it was shown that so-called generalised delta couplings at
the vertices guarantee positivity of the ground state independently of
the edge lengths. Therefore only vertex conditions given by generalised 
delta couplings will be considered throughout the paper (see \cite[Section 3.7]{PK24}).\footnote{Note that 
standard delta couplings at the vertices, and in particular standard vertex conditions, are special cases of 
the generalised delta couplings.}

In order to be able to speak about nodal domains the functions
at the vertices should satisfy weighted continuity conditions implying that
the class of generalised delta couplings should be reduced further
leading to weighted delta couplings described by Definition \ref{defgdc}.
The choice of vertex conditions is extremely important since as shown in Section \ref{SecDelta}
it is not sufficient to consider standard vertex conditions.

Let us discuss how minimal families of operators can be chosen.
 In the original works one varies a certain $ |V| \times |V| $ matrix $ \mathbf A$, whose non-diagonal entries 
are associated with the edges. 
The total number of parameters that are varied is 
\begin{equation} \label{parameters}
\# \; \mbox{vertices} + \# \; \mbox{edges}.
\end{equation}
With every discrete graph one may associate a Schr\"odinger operator on a metric graph. It appears that it is not enough to
consider standard Laplacians (see Section \ref{SecDelta}), therefore we consider the following three families, where the role
of vertex parameters is played by delta couplings:
\begin{enumerate}
\item Laplacians on metric graphs with delta couplings at the vertices (Section \ref{SecMG}).

\item Laplacians on equilateral metric graphs with weighted delta couplings at the vertices (Section \ref{SecWDC}).

\item Schr\"odinger operators with edge-wise constant potential
 on equilateral metric graphs with standard delta couplings at the vertices (Section \ref{SecSch}).
\end{enumerate}

The number of parameters in each family is given by formula \eqref{parameters}.
 The corresponding secular matrix given by \eqref{qq} for each family is a direct analog of the Colin de Verdi\`ere matrix $ \mathbf A$.
Geometric interpretation of the Colin de Verdi\`ere approach is especially clear when the first family if considered:
the parameters determining the matrix $ \mathbf A $ are just the lengths of the edges and the weights at the vertices.

 Our studies are closely related to studies of the spectral gap for Laplacians on metric graphs, comprehensive description of recent results for standard Laplacians can be found in 
 \cites{Nic1,Nic2,Fr,KuNa14,KeKuMaMu,BeKeKuMu23,BeKeKuMu17,BeKeKuMu19}, see also \cite[Chapter 12]{PK24}.

\section{Colin de Verdi{\`e}re parameter for discrete graphs} \label{SecCdV}

The Colin de Verdi{\`e}re parameter was originally defined in 1990 \cite{YC90} (see also translation \cite{YC93}). We are going to follow \cite{HV99}, which presented
a self-contained introduction into the subject.

Let $G=(V,E)$ be a connected discrete graph, where $V = \left\{ v^m \right\}_{m=1}^M $ is the set of vertices   and $ E $ is the set of edges.
We are going to write $ v^i \sim v^j $ if there is an edge between the vertices $ v^i $ and $ v^j$. Functions on $ G $ are defined on the vertices and can
therefore be identified with the vectors in $ \mathbb C^M$. We shall always assume that the graph is {\bf simple}, {\it i.e.} does not contain loops or
parallel edges.

With every discrete graph $ G $ 
let us associate  the set $\Sc_G\subset \sym_M$ of real-valued $ M \times M $ symmetric
 matrices $\mathbf{A}=(a_{ij})$, such that 
\begin{enumerate}
\item $a_{ii}$ is any real number,
\item $a_{ij} < 0$ if $ v^i\sim v^j$,
\item $a_{ij}=0$ if $v^i\not\sim v^j$ and $i\neq j$.
\end{enumerate}
We denote by 
$\Sc_G^\perp \subset \sym_M$ the set of matrices $ \mathbf X = (x_{ij}) $, such that
\begin{enumerate}
\item $a_{ii} = 0$,
\item $a_{ij} = 0$ if $ v^i\sim v^j$.
\end{enumerate}

\begin{remark}
If $ G $ is connected, then the ground state of any  $ \mathbf{A} \in \Sc_G $  is simple and the corresponding
eigenfunction can be chosen strictly positive. 
\end{remark}

Strict positivity comes from Perron-Frobenius theorem and is crucial for the choice of the set $ \Sc_G$.

We let $\xi _1 < \xi _2\leq ...$ denote the eigenvalues of $\mathbf{A}$. The idea behind the Colin de Verdi{\`e}re parameter is that one studies the multiplicity $ \mu $
of the second eigenvalue. Since the values on the diagonal are arbitrary one
 can always consider the matrix $\mathbf{A}-\xi _2\mathbf{I}$ instead of $\mathbf{A}$. Hence it is sufficient to consider matrices with precisely one negative eigenvalue and study the multiplicity of  $\xi _2=0$.
  It appears that the multiplicity of the second eigenvalue reflects topology of the graph only if it is in some sense stable under perturbations.

 One says that $\mathbf{A}\in \Sc_G$ has the {\bf Strong Arnold Property} if there is no non-zero symmetric matrix $\mathbf{X} \in \Sc_G^\perp  $  such that 
 \begin{equation} \label{eqSAP}
 \mathbf{AX}=\mathbf 0.
 \end{equation}
  The
 matrices in $\Sc_G$ with one negative eigenvalue, which in addition posses the Strong Arnold Property are called {\bf Colin de Verdi{\`e}re matrices}.

That $\mathbf{A}\in \Sc_G$ with $\dim( \ker(\mathbf{A}))=\mu$ has the Strong Arnold Property is equivalent to the fact that the spaces $\Sc_G$ and  
$$\Rc_\mu^M=\{\mathbf{A}\in \sym_M| \dim( \ker(\mathbf{A}))=\mu\}$$
intersect transversally at $\mathbf{A}$ in the space $\sym_M$.

\begin{definition}  \label{def1}
The {\bf Colin de Verdi{\`e}re graph parameter} $\mu(G)$ equals the maximal multiplicity $\mu$ of the eigenvalue $\xi _2=0$ for Colin de Verdi{\`e}re matrices
associated with the graph $ G$.
\end{definition}

\begin{remark}
The optimal Colin de Verdi\`ere matrix is not unique: multiplication by a real parameter gives a family of optimal matrices. 
\end{remark}

Note that in the definition above only symmetric matrices $ \mathbf{A} $ possessing the Strong Arnold Property are considered, which
 gives the Colin de Verdi{\`e}re graph parameter the important feature of minor monotonicity.

\begin{theorem}[Theorems 2.1 and 2.2 in \cite{YC90} or Statement 1.3 in \cite{HV99}]
The Colin de Verdi{\`e}re graph parameter $ \mu (G) $ is minor monotone
$$ \mbox{$H$ is a minor of $G$} \quad \Rightarrow \quad \mu(H) \leq \mu(G). $$
In particular $ \mu(G)$ can not increase when edges are deleted or contracted.
\end{theorem}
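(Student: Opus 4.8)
The plan is to reduce to the two generating operations — deletion and contraction of a single edge — since every minor is obtained by a finite sequence of such moves (together with deletion of isolated vertices, which trivially does not change $\mu$ by padding the optimal matrix with a zero row and column). So it suffices to prove: if $H = G - e$ or $H = G/e$ for an edge $e = v^i v^j$, then any Colin de Verdière matrix $\mathbf A$ for $H$ realizing multiplicity $\mu(H)$ can be modified into a Colin de Verdière matrix $\mathbf A'$ for $G$ with $\ker(\mathbf A')$ still of dimension at least $\mu(H)$. The three properties to track at every step are: (i) the sign pattern off the diagonal (strictly negative on edges of $G$, zero on non-edges), (ii) exactly one negative eigenvalue, and (iii) the Strong Arnold Property, i.e. transversality of $\Sc_G$ and $\Rc_\mu^M$ at $\mathbf A'$.

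For \textbf{edge deletion} $H = G - e$: start from an optimal $\mathbf A$ for $H$. Since $v^i \not\sim v^j$ in $H$, we have $a_{ij} = 0$; in $G$ this entry must become strictly negative. The idea is to perturb $\mathbf A$ by a small rank-considerations argument: we want to move $a_{ij}$ to a small negative value $-\varepsilon$ while keeping the kernel dimension. This is exactly where the Strong Arnold Property of $\mathbf A$ (as a matrix for $H$) enters — transversality of $\Sc_H$ with $\Rc_\mu^M$ says the family $\Rc_\mu^M$ can be followed along the complementary directions, and one shows that the direction $E_{ij} + E_{ji}$ (modulo $\Sc_H$) can be realized, so there is a curve $t \mapsto \mathbf A(t) \in \Rc_\mu^M$ with $\mathbf A(0) = \mathbf A$ and $\dot{a}_{ij}(0) \neq 0$; choosing $t$ with the correct sign and small magnitude gives $\mathbf A' = \mathbf A(t)$ lying in $\Sc_G$, still with exactly one negative eigenvalue (eigenvalues move continuously and the kernel has fixed dimension $\mu$, so no eigenvalue crosses zero), and still transversal, hence with the SAP for $G$. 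Thus $\mu(G) \geq \mu = \mu(H)$.

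For \textbf{edge contraction} $H = G/e$: here the vertex set shrinks by one, so given an optimal $\mathbf A$ for $H$ on $M-1$ vertices we must produce a matrix on $M$ vertices for $G$. The construction is to "split" the contracted vertex $w$ back into $v^i, v^j$: write the $w$-row of $\mathbf A$, distribute its off-diagonal entries appropriately between the two new rows (respecting which neighbours in $G$ attach to $v^i$ versus $v^j$), and insert the entry $a_{ij} = -\varepsilon < 0$ for the restored edge $e$, together with a large negative diagonal shift on $v^i$ or $v^j$ tuned so that the new $2\times2$ block essentially acts, on the relevant eigenvectors, like the old single diagonal entry at $w$. One then checks that eigenvectors of $\mathbf A$ lift to eigenvectors of $\mathbf A'$ with the same eigenvalue (putting equal values on $v^i$ and $v^j$), so $\dim\ker \geq \mu$; that the one-negative-eigenvalue count is preserved for small $\varepsilon$ (the ground state is still simple and negative by Perron--Frobenius, as noted in the Remark, and no other eigenvalue becomes negative); and finally that the SAP is preserved — this is the delicate point, since a matrix $\mathbf X \in \Sc_G^\perp$ with $\mathbf A' \mathbf X = 0$ would, after collapsing $v^i, v^j$, yield a candidate in $\Sc_H^\perp$ annihilating $\mathbf A$, contradicting the SAP of $\mathbf A$ unless $\mathbf X = 0$; making this collapse argument precise requires checking the block structure of $\mathbf X$ compatible with $\mathbf A'$ forces $\mathbf X$ to respect the splitting.

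The \textbf{main obstacle} is the transversality (Strong Arnold Property) bookkeeping in both cases, and especially the contraction case: deletion is essentially a continuity/perturbation argument once one knows transversality lets one move in the $E_{ij}+E_{ji}$ direction, but contraction changes the ambient dimension, so one must carefully set up the correspondence between $\Sc_G \cap \Rc_\mu^M$ near $\mathbf A'$ and $\Sc_H \cap \Rc_\mu^{M-1}$ near $\mathbf A$ and verify the transversal intersection is inherited. The cleanest route is to phrase SAP via the equivalent transversality formulation already recorded in the excerpt and to track how tangent spaces behave under the splitting map, rather than manipulating the equation $\mathbf{A}\mathbf{X}=\mathbf 0$ directly; but either way, controlling the kernel of $\mathbf X$ against the restored edge $e$ is where the real work lies.
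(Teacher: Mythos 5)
First, note that the paper does not prove this theorem at all: it is imported verbatim from \cite{YC90} and \cite{HV99}, so there is no in-paper argument to compare yours against. Judged on its own, your reduction to single edge deletions and contractions and your treatment of the deletion case are the standard route and essentially sound: transversality of $\Sc_H$ and $\Rc_\mu^M$ at $\mathbf A$, compared with the one-dimension-larger affine space underlying $\Sc_G$, forces (by a dimension count on the transversal intersections) the existence of a curve inside $\Rc_\mu^M$ along which the $(i,j)$ entry becomes strictly negative while the support pattern, the corank, the eigenvalue signature and the (open) transversality condition are all preserved. That part only needs the dimension count written out.

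The contraction case, however, contains a genuine gap. Your claim that eigenvectors of $\mathbf A$ lift to eigenvectors of $\mathbf A'$ ``by putting equal values on $v^i$ and $v^j$'' is false for a generic splitting: collapsing $v^i$ and $v^j$ turns the two row equations at $v^i$ and $v^j$ into their \emph{sum}, which you can arrange to be the old row equation at $w$, but a lifted vector must satisfy each of the two equations \emph{separately}. That is one extra linear condition on $\ker(\mathbf A)$, and there is no freedom left (the off-diagonal entries are sign-constrained and must distribute the old entries) to make it vanish on a $\mu$-dimensional kernel. This is precisely why the proofs in \cite{YC90} and \cite{HV99} do not construct $\mathbf A'$ by exact lifting: they take a one-parameter family in which the diagonal entry at the split vertex tends to infinity, so that the Schur complement onto the remaining vertices converges to $\mathbf A$, and then use the Strong Arnold Property in a limiting/compactness argument to guarantee that the corank does not drop at some finite parameter value. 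Your phrase ``tuned so that the new $2\times 2$ block essentially acts like the old diagonal entry'' points in this direction but supplies no mechanism, and the mechanism is the hard part. A smaller but real error: padding with a \emph{zero} row and column to restore an isolated vertex increases the corank by one and destroys the Strong Arnold Property (a nonzero $\mathbf X$ supported on the new row/column with columns in $\ker(\mathbf A)$ satisfies $\mathbf A\mathbf X=\mathbf 0$); one must pad with a strictly positive diagonal entry instead.
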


The importance of the Colin de Verdi{\`e}re graph parameter is reflected in the fact that  it  determines whether discrete graphs are planar or not.

\begin{theorem}[Theorems 3.2 and 5.7 in \cite{YC90} and Theorem 3 in \cite{LL98}] \label{ThCdV}
Let $ G $ be a simple connected discrete graph and $ \mu $ denotes its Colin de Verdi{\`e}re parameter. Then it holds:
\begin{itemize}
\item $\mu(G)=1$ iff $G$ is a path,
\item $\mu(G)\leq 2$ iff $G$ is outer planar,
\item $\mu(G)\leq  3$ iff $G$ is planar,
\item $\mu(G)\leq  4$ iff $G$ is linklessly embeddable.
\end{itemize}

\end{theorem}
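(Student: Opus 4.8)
The plan is to split each biconditional into a combinatorial ``lower bound'' direction and an analytic ``upper bound'' direction, treating the four bullets by a common scheme of increasing topological difficulty.

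\emph{Lower bounds via minor monotonicity.} For the implications ``$\mu(G)$ small $\Rightarrow$ $G$ structured'' I would argue by contraposition, using the forbidden-minor characterisations: a connected graph fails to be a path iff it has $K_3$ or $K_{1,3}$ as a minor; fails to be outerplanar iff it has $K_4$ or $K_{2,3}$ as a minor; fails to be planar iff it has $K_5$ or $K_{3,3}$ as a minor (Kuratowski--Wagner); and fails to be linklessly embeddable iff it has a minor in the Petersen family (Robertson--Seymour--Thomas). It then suffices to exhibit, for each such obstruction $H$, a Colin de Verdi\`ere matrix whose second eigenvalue has multiplicity at least $2,3,4,5$ respectively, and to invoke minor monotonicity (the Theorem quoted just above). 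For complete graphs one may take $\mathbf A=-J$ (all entries equal to $-1$): its spectrum is $\{-|V|\}\cup\{0\}^{|V|-1}$, and it has the Strong Arnold Property because $\Sc_{K_n}^\perp=\{0\}$, so $\mu(K_n)=|V|-1$; this covers $K_3,K_4,K_5,K_6$. The bipartite cases $K_{1,3},K_{2,3},K_{3,3}$ and the remaining members of the Petersen family are handled by similar explicit constructions, choosing the diagonal so that exactly one eigenvalue is negative and then verifying the transversality condition \eqref{eqSAP}; only the lower bound on the multiplicity is needed here.

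\emph{Upper bounds via nodal domains.} For the implications ``$G$ structured $\Rightarrow$ $\mu(G)$ small'' the essential input is the sign structure of vectors in $\ker\mathbf A$. After the normalisation $\mathbf A\mapsto\mathbf A-\xi_2\mathbf I$ one has $\xi_2=0$ with exactly one eigenvalue below it, and because the ground state is strictly positive a discrete Courant-type nodal theorem applies: for every nonzero $x\in\ker\mathbf A$ the vertex sets $\{x>0\}$ and $\{x<0\}$ each induce a connected subgraph (at most two nodal domains). The path case is then immediate --- a Colin de Verdi\`ere matrix of a path is an irreducible Jacobi matrix, whose eigenvalues are all simple, so $\mu\le 1$, while $\mu\ge 1$ for any connected graph on at least two vertices. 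For the outerplanar and planar bounds I would follow the argument of Colin de Verdi\`ere and van der Holst: reduce to the $2$-connected case, fix a planar (equivalently spherical) embedding, and assume for contradiction that $\dim\ker\mathbf A$ exceeds $2$ (resp. $3$); selecting null vectors that vanish at a suitable set of vertices on a single face and feeding the connectedness of their positive and negative supports into a Jordan-curve argument on the sphere yields the contradiction. The bound $\mu(G)\le 4$ for linklessly embeddable $G$ (Lov\'asz--Schrijver) is the three-dimensional analogue, in which the planar Jordan-curve input is replaced by a Borsuk-type argument on links in $S^3$.

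\emph{Where the difficulty lies.} The explicit matrices and the minor-monotonicity bookkeeping are routine once the obstruction sets are available. The genuine obstacle is the upper-bound direction, and within it two steps: first, justifying the nodal decomposition, which is exactly the point at which positivity of the ground state (hence the sign condition $a_{ij}<0$) is used, and where the Strong Arnold Property enters through the transversality formulation (intersection with $\Rc_\mu^M$) to perturb $\mathbf A$ into general position --- null vectors nonvanishing off a prescribed zero set --- without lowering the kernel dimension; second, the topological step itself, a delicate Jordan-curve argument for planarity and a substantially harder argument for linkless embeddability. I therefore expect essentially all the subtlety to be concentrated in the van der Holst and Lov\'asz--Schrijver nodal-domain analyses rather than in the combinatorial reductions.
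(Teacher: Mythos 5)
This theorem is quoted in the paper with attribution to \cite{YC90} and \cite{LL98}; the paper gives no proof of it (it only remarks that the direction ``$\mu\le 3\Rightarrow$ planar'' rests on Kuratowski's theorem \cite{Kuratowski}), so there is no in-paper argument to compare yours against. That said, your two-directional scheme does match the architecture of the proofs in the cited sources. The directions ``$G$ not structured $\Rightarrow\mu(G)$ large'' go exactly as you describe: forbidden-minor characterisations (Kuratowski--Wagner for planarity, the Petersen family for linkless embeddability), minor monotonicity, and explicit Colin de Verdi\`ere matrices for the obstructions. Your computation $\mu(K_n)=n-1$ via $\mathbf A=-J$ together with $\Sc_{K_n}^\perp=\{0\}$ is correct, and the complete bipartite cases work as you indicate (for $K_{3,3}$ the block matrix with off-diagonal blocks $-J$ has kernel of dimension $4$ and satisfies SAP by a short computation). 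The converse directions are indeed where the substance lies, and the sources you would lean on (van der Holst's argument as presented in \cite{HV99}, the Borsuk-type theorem of \cite{LL98}) are the right ones.

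Two cautions. First, your nodal statement is too strong as written: it is \emph{not} true that every nonzero $x\in\ker\mathbf A$ has connected positive and negative supports. The discrete Courant bound for a degenerate second eigenvalue only controls the number of sign domains up to the multiplicity, and the actual engine of the upper bounds --- van der Holst's lemma --- asserts connectivity of $\{x>0\}$ and $\{x<0\}$ only for kernel vectors of \emph{minimal support}, with a structured alternative when connectivity fails; the Strong Arnold Property is then used to exclude the degenerate alternative and to place the kernel in general position. Any complete write-up must work with minimal-support null vectors rather than arbitrary ones, and this is precisely where your sketch would need repair. Second, a historical point that does not affect correctness: the original proof of ``planar $\Rightarrow\mu\le 3$'' in \cite{YC90} is analytic, obtained by degenerating Schr\"odinger operators on the sphere to the graph and invoking Cheng's bound on eigenvalue multiplicities on surfaces; the purely combinatorial Jordan-curve argument you outline is van der Holst's later proof, which is the one \cite{HV99} presents. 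Either route is legitimate, but you should be aware they are genuinely different arguments.
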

Properties $(1)-(3)$ were shown in Colin de Verdi\`ere's original article \cite{YC90} where  $\mu(G)$ was introduced for the first time,
the proof of
property $(3)$ naturally required use of Kuratowski's theorem \cite{Kuratowski}. The forth statement was proven in \cite{LL98}.

It has also been shown that for important classes of graphs it is not necessary to require additionally  the Strong Arnold Property
-- it is automatically fulfilled for matrices satisfying the other requirements \cite{vdH10,AS17}. 

\section{Metric graphs and M matrices} \label{Sec3}

 A {\bf metric graph} $\Gamma = (\mathbf E, \mathbf V) $ is given by a set $\mathbf{E}$ of intervals $E_n=[x_{2n-1},x_{2n}]$, $n=1,...,N$, called {\bf edges}, 
 and a partition $\mathbf{V}$ of the end points $x_1,...,x_{2N}$ into non empty equivalence classes  $(v^i)_{i=1}^M$, called {\bf vertices}. $\Gamma$ becomes a metric space 
 if  all the points in each vertex are identified. We write $v^i \sim v^j$ if there is an edge with one end point in $v^i$ and one in $v^j$. We assume that 
 the metric graphs are simple and connected. If $v^i \sim v^j$ we let $\ell_{ij}$ denote the length of the edge between $v^i$ and $v^j$.
  The corresponding edge will be denoted by $ E_{ij}$, in other words we shall use two notations for the edges:  enumerating them in a certain order
  as $ E_n, \; n= 1,2, \dots, N$ and indicating which vertices they connect as $ E_{ij}, \; i,j = 1,2, \dots, M.$ Since the graphs are simple the two parameters
  $ n $ and $ ij $ uniquely determine each other. 
  The corresponding discrete graph (sharing the same set of vertices and edges) will be denoted by $ G = G(\Gamma).$

The space of functions on $\Gamma$ is the space $L^2(\Gamma)=\bigoplus_{n=1}^NL^2(E_n)$. 
Only  Laplacians on $ \Gamma $ defined by the differential expression
$ L u = - \frac{d^2}{dx^2} u $
will be considered. The maximal operator is defined on the domain $ W_2^2 (\Gamma) = \bigoplus_{n=1}^N W_2^2 (E_n) $, but this operator is not symmetric.
To make it self-adjoint it is necessary to introduce certain vertex conditions
connecting values of the function and its first derivatives separately at each vertex.  To this end we introduce
the limiting values of the functions and their oriented first derivatives at the end points:
$
u(x_j) := \lim_{x \rightarrow x_j} u(x), \qquad \partial u(x_j) := (-1)^{j+1}  \lim_{x \rightarrow x_j}  u'(x),
$
where the limit is taken from inside the edges.

In the first part of the paper we shall only use delta couplings at the vertices, which we call  standard
in order to distinguish them from weighted delta couplings introduced later.

\begin{definition}
Let  $ \alpha^j  \in \mathbb R $ be a real coupling parameter. Then  
{\bf standard delta coupling} at the vertex $ v^j $ is given by the following conditions:
\begin{equation} \label{sdc}
\left\{
\begin{array}{l}
\vu (x_i) = \vu (x_l),=: \vu (v^j) \; x_i, x_l \in v^j, \quad   \\[3mm]
\sum_{x_i \in v^j} \partial \vu (x_i) = \alpha^j  \vu (v^j). 
\end{array} \right.
\end{equation}
\end{definition}

Note that the first condition implies that the function is continuous at the vertices and therefore
its values $ u (v^j) $ are well-defined. We denote by $L^{\vec \alpha}(\Gamma)$ the Laplacian on $\Gamma$ with standard delta couplings given by $\vec \alpha$.

The spectra of these operators can be described using M-functions (see Section 5.3.4 in \cite{PK24}). 
Let $ u $ be any solution to the differential equation
$ -u''=\lambda  u, \quad \im \lambda \neq 0 $
on each edge. Then the M-function connects together Dirichlet and Neumann data at each vertex:
\begin{equation}  \label{eqM}
 \small \mathbf M (\lambda):
\left(
\begin{array}{c}
u (v^1)  \\
u (v^2)  \\
\vdots \\
u (v^M) 
\end{array}
\right)  \mapsto \left(
\begin{array}{c}
\partial u (v^1) \\
\partial u (v^2)  \\
\vdots \\
\partial u (v^M) \end{array}
\right) ,
\end{equation}
where we use notation
$ \partial u (v^j) := \sum_{x_i \in v^j} \partial u (x_i). $

Let the edge between two vertices $ v^i$ and $ v^j $ with length $ \ell_{ij}$ be parametrized as $ E_1 = [x_1, x_2] $. Then
the Titchmarsh-Weyl M-function for the edge connecting the function values $ (u(x_1), u(x_2)) $ to the normal derivative values $ (\partial u(x_1), \partial u(x_2) ) $
is given by the $ 2 \times 2 $ matrix
\begin{equation} \label{7}
 \mathbf M_{\bf e} (\lambda) : = \begin{pmatrix} -\frac{\sqrt{\lambda }\cos(\sqrt{\lambda }\ell_{ij})}{\sin(\sqrt{\lambda }\ell_{ij}) } & \frac{\sqrt{\lambda }}{\sin(\sqrt{\lambda }\ell_{ij} )}
 \\ \frac{\sqrt{\lambda }}{\sin(\sqrt{\lambda }\ell_{ij} )} & -\frac{\sqrt{\lambda }\cos(\sqrt{\lambda }\ell_{ij} )}{\sin(\sqrt{\lambda }l\ell_{ij} )}\end{pmatrix}.
 \end{equation}

If this matrix has a non-trivial kernel for some $\lambda $,
 then that $\lambda $ is a Neumann eigenvalue for that edge since there must be some function satisfying $-u''=\lambda  u$ and $(u(x_1),u(x_2))\neq (0,0)$ but with $(\de u(x_1),\de u(x_2))= (0,0)$.

The matrix M-function for $ \Gamma $ is formed from the $ 2 \times 2 $ edge M-functions:
\begin{equation}  \label{edgeM}
\small
\left(\mathbf M (\lambda) \right)_{ij} =
\left\{
\begin{array}{ll}
- \displaystyle \sum_{v^m \sim v^i} \sqrt{\lambda} \cot  \sqrt{\lambda}  \ell_{im}, & j =i; \\[3mm]
 \displaystyle \frac{ \sqrt{\lambda} }{\sin  \sqrt{\lambda}  \ell_{ij}} , & v^i \sim v^j; \\[3mm]
\displaystyle  0, &  v^i \not\sim v^j,j \neq i. \\[3mm]
 \end{array} \right.
\end{equation}

We note that $\lambda \mapsto\cos(\sqrt{\lambda }\ell_{ij})$ is an entire function and $\lambda \mapsto\frac{\sqrt{\lambda }}{\sin(\sqrt{\lambda }\ell_{ij})}$ is 
a meromorphic function with simple poles at $(\frac{\pi}{\ell_{ij}})^2n^2$, where $n$ is a positive integer.
Hence the matrix valued M-function can be continued to the real axis for all $ \lambda \neq \left( \frac{\pi}{\ell_{ij}} \right)^2n^2, \; n  = 1,2, \dots$.

A real number $ \lambda \not= \left(\frac{\pi}{\ell_{ij}}\right)^2n^2, \; \forall \ell_{ij}  ,$ is an eigenvalue
 of the Laplacian with delta interactions only if the
 linear equation \begin{equation} \label{eq7}
\small \Big( \mathbf M(\lambda) - {\rm diag}\; \left\{ \alpha_{m} \right\} \Big) 
\left(
\begin{array}{c}
 u(v^1) \\
\vdots \\
u(v^M)
\end{array}
\right)
= 0  
\end{equation}
has a non-trivial solution. We define the secular matrix
\begin{equation} \label{qq} 
\mathbf Q(\lambda):= \mathbf M(\lambda) - {\rm diag}\; \left\{ \alpha_{m} \right\}.
\end{equation}
The multiplicity of the eigenvalue coincides with the dimension of the kernel of $\mathbf Q(\lambda)$, hence we have proven:

\begin{lemma} \label{lem1}
The dimension of $\ker( \mathbf Q(\lambda))$, for $\lambda$ such that $\lambda\neq(\frac{\pi}{\ell_{ij}})^2$ for all $i\sim j$, equals the multiplicity of the eigenvalue $\lambda $ for the associated Laplacian $L^{\vec \alpha}(\Gamma)$.
\end{lemma}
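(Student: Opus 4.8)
The plan is to establish a linear isomorphism between the eigenspace of $L^{\vec\alpha}(\Gamma)$ at $\lambda$ and $\ker(\mathbf Q(\lambda))$, implemented by the evaluation map $u\mapsto \vec c_u:=(u(v^1),\dots,u(v^M))^{\mathsf T}$. Under the assumption of the lemma the matrix $M$-function is finite at $\lambda$ — equivalently, $\lambda$ is not a Dirichlet eigenvalue of any single edge $E_{ij}$, so $\sin(\sqrt\lambda\,\ell_{ij})\neq 0$ for every edge — hence $\mathbf M(\lambda)$, and therefore $\mathbf Q(\lambda)$, is a genuine $M\times M$ matrix and the statement is meaningful. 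Recall also that for a self-adjoint operator the multiplicity of an eigenvalue is by definition the dimension of the corresponding eigenspace, so it suffices to show that $\ker(\mathbf Q(\lambda))$ has the same dimension as the eigenspace at $\lambda$.

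First I would verify that $u\mapsto\vec c_u$ is injective on the eigenspace. If $u$ is an eigenfunction with $\vec c_u=0$, then on each edge $E_{ij}=[x_1,x_2]$ the restriction $u|_{E_{ij}}$ solves $-u''=\lambda u$ with vanishing values at both end points; since $\lambda$ is not a Dirichlet eigenvalue of that edge, $u|_{E_{ij}}\equiv 0$. As this holds for every edge, $u\equiv 0$, so the (linear) map is injective.

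Next I would check surjectivity onto $\ker(\mathbf Q(\lambda))$. Given $\vec c=(c_1,\dots,c_M)^{\mathsf T}\in\ker(\mathbf Q(\lambda))$, define a function $u$ on $\Gamma$ edge by edge: on $E_{ij}$ let $u$ be the unique solution of $-u''=\lambda u$ taking the value $c_i$ at the $v^i$-end and $c_j$ at the $v^j$-end (existence and uniqueness again because $\sin(\sqrt\lambda\,\ell_{ij})\neq 0$). Then $u\in W_2^2(\Gamma)$, it is continuous at every vertex by construction with $u(v^j)=c_j$, so the first line of \eqref{sdc} holds, and $-u''=\lambda u$ on every edge. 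The defining property of the matrix $M$-function \eqref{7}--\eqref{edgeM} is precisely that $\sum_{x_i\in v^j}\partial u(x_i)=\bigl(\mathbf M(\lambda)\vec c\,\bigr)_j$ for each $j$; hence the equation $\mathbf Q(\lambda)\vec c=0$, read componentwise as $\bigl(\mathbf M(\lambda)\vec c\,\bigr)_j=\alpha_j c_j$, is exactly the second line of \eqref{sdc} at every vertex. Therefore $u$ lies in the domain of $L^{\vec\alpha}(\Gamma)$ and $L^{\vec\alpha}(\Gamma)u=\lambda u$, with $\vec c_u=\vec c$. Together with injectivity and linearity this yields the required equality of dimensions.

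The one step that needs genuine care is the identity $\sum_{x_i\in v^j}\partial u(x_i)=\bigl(\mathbf M(\lambda)\vec c\,\bigr)_j$, i.e. the passage from the $2\times 2$ edge blocks \eqref{7} to the entries of \eqref{edgeM}. One fixes a parametrization of each edge, uses \eqref{7} to express the pair of inward normal derivatives $\partial u(x_k)=(-1)^{k+1}\lim u'(x_k)$ in terms of the pair of Dirichlet values on that edge, and then adds the contributions over all end points belonging to a given equivalence class $v^j$; simplicity of $\Gamma$ guarantees that each off-diagonal entry $\sqrt\lambda/\sin(\sqrt\lambda\,\ell_{ij})$ comes from a single edge, while the diagonal entry collects one $-\sqrt\lambda\cot(\sqrt\lambda\,\ell_{im})$ per edge incident to $v^j$. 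This sign- and orientation-bookkeeping is the only non-automatic part; everything else follows from the elementary solvability theory for $-u''=\lambda u$ on an interval.
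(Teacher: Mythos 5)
Your proposal is correct and follows essentially the same route the paper takes: the paper treats the lemma as an immediate consequence of the defining property of the M-function (that $\mathbf Q(\lambda)\vec{c}=0$ encodes the vertex conditions for the edge-wise solution determined by the Dirichlet data $\vec{c}$), and you simply spell out the resulting bijection between $\ker(\mathbf Q(\lambda))$ and the eigenspace, including the injectivity/surjectivity of the evaluation map that the paper leaves implicit. Your explicit reading of the hypothesis as $\sin(\sqrt{\lambda}\,\ell_{ij})\neq 0$ for every edge is the correct interpretation of the condition in the lemma.
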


To understand relation between the singularities of the M-function and eigenvalues of $ \mathbf Q (\lambda) $ we use ideas developed in \cite{KuNa}. 
The M-function is a Matrix valued Herglotz-Nevanlinna function i.e. for $\lambda\in \mathbb{C}\backslash\mathbb{R}$ we have that $M(\lambda)$ is analytic, 
symmetric $\mathbf M(\overline{\lambda})=\mathbf M(\lambda)^*$ and 
has non-negative imaginary part in the upper half-plane  $\frac{\im{\mathbf M(\lambda)}}{\im{\lambda}}\geq 0$ where $\im(\mathbf M(\lambda))=\frac{\mathbf M(\lambda)-\mathbf M(\lambda)^*}{2i}$.

\begin{lemma} \label{lem2}
The Titchmarsh-Weyl M-function $ \mathbf M(\lambda) $ is a Herglotz-Nevanlinna $ M \times M $ matrix-valued function with the following properties:
\begin{itemize}
\item For every $ \lambda \in \mathbb R $, that is not a singularity $(\frac{\pi}{\ell_{ij}})^2$ of $\mathbf M(\lambda) $, there are precisely $ M $ real eigenvalues  $ \xi_j (\lambda), \, j =1,2, \dots, M $, depending analytically on $ \lambda $
\footnote{If two eigenvalue curves cross each other, then to keep analyticity, one may need to adjust the order of the indexes.}
\item The eigenvalues are increasing as functions of real $\lambda$ except for possible singularities at the points 
\begin{equation} \label{sing}
\lambda=\left(\frac{\pi}{\ell_{ij}}\right)^2 n^2,  \quad n \in \mathbb N.
\end{equation}
\item All $M$ eigenvalues tend to $-\infty$ as $\lambda$ tends to $-\infty$
satisfying the asymptotics:
\begin{equation} \label{asev}
\xi_j \sim - d_j \sqrt{|\lambda|}, \quad \lambda \rightarrow - \infty,
\end{equation}
where $ d_j $ is the degree of the vertex $ v^j$.
\end{itemize}
\end{lemma}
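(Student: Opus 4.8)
The plan is to read all three properties off the explicit formula \eqref{edgeM}, the assembly structure of $\mathbf M(\lambda)$ from the edge M-functions \eqref{7}, a Green's identity on $\Gamma$, and the Rellich--Kato theorem on self-adjoint analytic families. Write $P_n$ for the real $2\times M$ matrix that extracts the two endpoint values of $E_n$ from the vector of vertex values; then $\mathbf M(\lambda)=\sum_{n}P_n^{\top}\mathbf M_{\mathbf e}(\lambda;E_n)P_n$, which matches \eqref{edgeM} entry by entry since $\Gamma$ is simple. The Herglotz--Nevanlinna property (already announced in the paragraph preceding the lemma) comes as follows: for $\lambda\in\mathbb C\setminus\mathbb R$ let $u$ solve $-u''=\lambda u$ on every edge with prescribed continuous vertex values $\vec u$, so that $\partial u(v^j)=(\mathbf M(\lambda)\vec u)_j$ by \eqref{eqM}; applying the Lagrange identity edge by edge, summing, and subtracting the complex-conjugate relation (which removes the $\|u'\|^2$ term and uses continuity of $u$ at the vertices to regroup boundary terms) yields $\im\langle\mathbf M(\lambda)\vec u,\vec u\rangle=\im\lambda\cdot\|u\|^2_{L^2(\Gamma)}\ge 0$, strictly when $\vec u\neq 0$ because a nonzero vertex vector forces $u$ to be nonzero on at least one edge. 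Analyticity off $\mathbb R$ and the symmetry $\mathbf M(\overline{\lambda})=\mathbf M(\lambda)^*$ follow from \eqref{edgeM}: each entry is $\cos(\sqrt{\lambda}\,\ell)\cdot\tfrac{\sqrt{\lambda}}{\sin(\sqrt{\lambda}\,\ell)}$ or $\tfrac{\sqrt{\lambda}}{\sin(\sqrt{\lambda}\,\ell)}$, single-valued meromorphic functions of $\lambda$ with poles only at the points \eqref{sing} and real on $\mathbb R$ away from those poles, so Schwarz reflection applies. In particular $\mathbf M$ is real-analytic on each open interval between consecutive points of \eqref{sing}.

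For the first bullet, note that on such an interval every entry of \eqref{edgeM} is a real number (for $\lambda<0$ substitute $\sqrt{\lambda}=i\sqrt{|\lambda|}$, so the entries become $\sqrt{|\lambda|}\coth(\sqrt{|\lambda|}\,\ell_{im})$ and $\sqrt{|\lambda|}/\sinh(\sqrt{|\lambda|}\,\ell_{ij})$), hence $\mathbf M(\lambda)$ is a genuine real symmetric $M\times M$ matrix with $M$ real eigenvalues, and the Rellich--Kato theorem for self-adjoint analytic families gives analytic branches $\xi_1(\lambda),\dots,\xi_M(\lambda)$ on each such interval, with the reindexing at crossings noted in the footnote. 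For the second bullet the key input is $\mathbf M'(\lambda)>0$ for real $\lambda\notin$ \eqref{sing}. This follows from the Herglotz representation $\mathbf M(\lambda)=C_0+C_1\lambda+\int(\tfrac1{t-\lambda}-\tfrac{t}{1+t^2})\,d\Sigma(t)$ with $C_1\ge 0$ and $d\Sigma\ge 0$, which gives $\mathbf M'(\lambda)=C_1+\int(t-\lambda)^{-2}\,d\Sigma(t)\ge 0$, together with the classical derivative identity $\langle\mathbf M'(\lambda)\vec f,\vec f\rangle=\|u_{\vec f}\|^2_{L^2(\Gamma)}$, where $u_{\vec f}$ is the solution of $-u''=\lambda u$ on $\Gamma$ with vertex values $\vec f$; since $\lambda$ is not among \eqref{sing}, a nonzero $\vec f$ gives $u_{\vec f}\not\equiv 0$, so the identity is strictly positive. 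The Hellmann--Feynman formula $\xi_j'(\lambda)=\langle\mathbf M'(\lambda)\psi_j(\lambda),\psi_j(\lambda)\rangle$ for a unit eigenvector $\psi_j$ then gives $\xi_j'(\lambda)>0$, so each branch is strictly increasing on every interval between consecutive singularities \eqref{sing}.

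For the third bullet, set $\lambda=-\kappa^2$ and let $\kappa\to+\infty$; this stays on the singularity-free interval below $\min_n(\pi/\ell_n)^2$. The diagonal entry of \eqref{edgeM} at $v^i$ equals $-\sum_{v^m\sim v^i}\kappa\coth(\kappa\ell_{im})=-d_i\kappa+O(\kappa e^{-c\kappa})$ with $c=\min_n\ell_n>0$ and $d_i$ the degree of $v^i$, while every off-diagonal entry equals $\kappa/\sinh(\kappa\ell_{ij})=O(\kappa e^{-c\kappa})$. Hence $\mathbf M(-\kappa^2)=-\kappa\,\mathrm{diag}\{d_1,\dots,d_M\}+R(\kappa)$ with $\|R(\kappa)\|=O(\kappa e^{-c\kappa})$, and Weyl's perturbation inequality yields $\xi_j(-\kappa^2)=-d_j\kappa+O(\kappa e^{-c\kappa})$ once the branches are relabelled so that the $j$-th one tracks the eigenvalue cluster near $-d_j\kappa$ (the Gershgorin discs of $\mathbf M(-\kappa^2)$ separate these clusters for $\kappa$ large). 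This is exactly \eqref{asev}, and since every $d_j\ge 1$ all $M$ eigenvalues tend to $-\infty$.

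The step I expect to be most delicate is not any single computation but the bookkeeping around the singular set \eqref{sing}: one must be sure no eigenvalue branch is locally constant (which is exactly what $\mathbf M'(\lambda)>0$ buys) and keep track of which branch escapes to $\pm\infty$ as $\lambda$ crosses a pole of $\mathbf M$ and which one re-enters from the opposite infinity --- precisely the behaviour that the monotonicity assertion and the asymptotics \eqref{asev} are meant to package, and the place where the indexing conventions together with the footnote's crossing caveat have to be handled carefully.
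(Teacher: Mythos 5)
Your proof is correct, but it reaches the central Herglotz--Nevanlinna property by a genuinely different route than the paper. The paper works entirely with the explicit formula \eqref{7}: it diagonalises $\im\bigl(\mathbf M_{\mathbf e}(\lambda)\bigr)$ by hand, rewrites the two eigenvalues as $\frac{1}{\ell_{ij}}\im\bigl((x+iy)\tan\frac{x+iy}{2}\bigr)$ and $-\frac{1}{\ell_{ij}}\im\bigl((x+iy)\cot\frac{x+iy}{2}\bigr)$ with $\sqrt{\lambda}\,\ell_{ij}=x+iy$, and verifies their positivity through closed-form trigonometric identities; positivity of $\im\mathbf M(\lambda)$ then follows by summing the edge contributions. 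You instead derive $\im\langle\mathbf M(\lambda)\vec u,\vec u\rangle=\im\lambda\,\|u\|^2_{L^2(\Gamma)}$ from the Lagrange identity, which is more conceptual, avoids the trigonometric bookkeeping, and transfers verbatim to the Schr\"odinger and weighted settings of Sections \ref{SecWDC}--\ref{SecSch}; the paper's computation is more self-contained and elementary. The remaining bullets are handled similarly in both arguments (Rellich--Kato for the analytic branches, $\coth$ and $1/\sinh$ asymptotics for \eqref{asev}), except that for monotonicity the paper only invokes the Herglotz property to conclude that the $\xi_j$ are non-decreasing, whereas your combination of the derivative identity $\langle\mathbf M'(\lambda)\vec f,\vec f\rangle=\|u_{\vec f}\|^2_{L^2(\Gamma)}$ with the Hellmann--Feynman formula yields the strict increase actually asserted in the statement --- a small but genuine strengthening of the paper's argument. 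One cosmetic remark: the Gershgorin separation of the clusters near $-d_j\kappa$ is not needed (and fails when two vertices share a degree); Weyl's inequality applied to $\mathbf M(-\kappa^2)+\kappa\,\mathrm{diag}\{d_j\}$ already gives \eqref{asev} with the correct multiplicities.
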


\begin{proof}
Explicit formula \eqref{7}  for  $\mathbf{M}(\lambda)$ implies that this matrix-valued function is analytic for non-real lambda and that $\mathbf{M}(\lambda)^*=\mathbf{M}(\overline{\lambda})$.
The singularities of the M-function coincide with the zeroes of sine functions appearing in the denominator -- the Dirichlet-Dirichlet
eigenvalues of the Laplacian on the edges. These singularities are given by formula \eqref{sing}.

The $ M \times M $ matrix-valued function $\mathbf{M}(\lambda)$  is a sum of $ 2 \times 2 $ matrix-valued  M-functions for single edges 
given by formula \eqref{edgeM}. Each of these matrix-valued functions is a Herglotz-Nevanlinna function. This is since for $\im(\lambda)>0$ the eigenvalues of 
$$\im(\mathbf{M}_e(\lambda))
= 
\begin{pmatrix}
\im\left(-\frac{\sqrt{\lambda }\cos(\sqrt{\lambda }l_{ij})}{\sin(\sqrt{\lambda }l_{ij}) }\right) &
\im\left(\frac{\sqrt{\lambda }}{\sin(\sqrt{\lambda }l_{ij}) }\right)\\[3mm]
\im\left(\frac{\sqrt{\lambda }}{\sin(\sqrt{\lambda }l_{ij}) }\right) &
\im\left(-\frac{\sqrt{\lambda }\cos(\sqrt{\lambda }l_{ij})}{\sin(\sqrt{\lambda }l_{ij}) }\right)
\end{pmatrix}
$$ are 
$$\im\left(-\frac{\sqrt{\lambda }\cos(\sqrt{\lambda }l_{ij})}{\sin(\sqrt{\lambda }l_{ij}) }\right)\pm \im\left(\frac{\sqrt{\lambda }}{\sin(\sqrt{\lambda }l_{ij}) }\right). $$
If we let $\sqrt{\lambda}l_{ij}=:x+iy$ then the eigenvalues can be written as 
$$\frac{1}{\ell_{ij}}\im\left((x+iy)\tan(\frac{x+iy}{2})\right), -\frac{1}{\ell_{ij}}\im\left((x+iy)\cot(\frac{x+iy}{2})\right)$$
where $x,y>0$. $\im(\mathbf{M}_e)$ is then positive definite since we have that 
$$\im\left((x+iy)\tan(\frac{x+iy}{2})\right)=\frac{\sinh(y)x+\sin(x)y}{\cosh(y)+\cos(x)}>0$$
and
$$-\im\left((x+iy)\cot(\frac{x+iy}{2})\right)=\frac{\sinh(y)x-\sin(x)y}{\cosh(y)-\cos(x)}>0.$$
 Hence $\mathbf{M}(\lambda)$ is also a Herglotz-Nevanlinna function. 
 
 For real $ \lambda $ different from \eqref{sing}  the matrix $\mathbf{M}(\lambda) $ is Hermitian, hence perturbation theory  (see {\it e.g.} \cite[Theorem 6.1]{Kato}) implies
 that the eigenvalues of $ \mathbf M(\lambda) $ depend analytically on $ \lambda $. They are non-decreasing functions since the M-function is Herglotz-Nevanlinna.

 As $\lambda$ tends to $-\infty$ all the elements of $\mathbf{M}(\lambda)$ outside of the diagonal tend to $0$.
 On the diagonal $ \cot \sqrt{\lambda} l_{im} $ tends to $1$, which gives 
 asymptotics \eqref{asev}  taking into account that for every vertex there are precisely $ d_j $ terms  of the form
 $  - \sqrt{\lambda} \cot \sqrt{\lambda} \ell_{im}  \sim - \sqrt{\lambda}$.
\end{proof}

Note that precisely the same conclusions as in Lemma \ref{lem2} hold for the Herglotz-Nevanlinna matrix valued function
$\mathbf Q(\lambda) := \mathbf M (\lambda) - {\rm diag}\, \{ \alpha_j \}. $
For small negative $ \lambda \rightarrow - \infty$ the matrix $ \mathbf Q(\lambda) $ is strictly negative. As $ \lambda $ increases, the eigenvalues
$ \xi_j (\lambda) $ of $ \mathbf Q(\lambda) $  increase as well and the ground state for the graph corresponds to the $ \lambda_1 $ for which the largest
$ \xi_j $, say $ \xi_1 (\lambda) $ crosses the line $ \xi = 0$, {\it i.e.}
$$ \xi_1(\lambda_1) = 0. $$
The ground state of $ \Gamma$ is simple \cite{PK24,Ku19}, hence no other energy curve $ \xi_j (\lambda), j \neq 1 $ crosses the line at this value of $ \lambda$.

Directly to the right of the point $ \lambda_1 $, the matrix-valued function $ \mathbf Q(\lambda) $ has precisely one positive eigenvalue.
Two scenarios may occur:
\begin{itemize}
\item The second eigenvalue of the Laplacian on $ \Gamma $ corresponds to the $ \lambda_2 $ for which the second
energy curve approaches the line $ \xi = 0 $.
\item The first energy curve approaches the singularity and goes to $ + \infty $ and continues to the right of the singularity starting from $- \infty $.
In this case $ \lambda_2 $ appears again as the point, where the largest $ \xi_j$ crosses the axis $ \xi = 0$.
\end{itemize}

In the first case the matrix $ \mathbf M (\lambda_2) $ has $ \xi= 0 $ as the second largest eigenvalue.
In the second case $ \mathbf M (\lambda_2) $ is a non-positive matrix with $ \xi= 0 $ as the largest eigenvalue.

There is also a border case where $ \lambda_2 $ coincides with the lowest singularity of the M-function.
In what follows we shall only consider metric graphs corresponding to the first case, where $ \lambda_2 $ is lying below
the lowest singularity.
The multiplicity of $ \lambda_2 $ coincides with
the number of energy curves crossing the axis at this energy.

\section{Metric graphs with standard delta couplings} \label{SecMG}

In this section we shall consider Laplacians on metric graphs with arbitrary edge lengths and standard delta conditions at the vertices. Hence
we shall always assume that the functions from the domain are continuous at the vertices. The parameters that will vary will include
the lengths of the edges $ \ell_{ij} \in \mathbb R_+$,
and the delta couplings at the vertices $ \alpha_j \in \mathbb R$.

Our strategy will be to show that for any Colin de Verdi\`ere matrix $\mathbf{A}$ we can find a metric graph $ \Gamma $ associated with the discrete graph $ G $ so that
 equation 
 \begin{equation} \label{eq2}
\mathbf M (\lambda_2) - {\rm diag}\; \{ \alpha_j \} = - \mathbf A.
\end{equation}
holds for the second eigenvalue $ \lambda_2 (\Gamma) $. In addition we need
that $ \lambda_2 (\Gamma) $ lies below all singularities of the M-function.
To this end we introduce the family of metric graphs with delta couplings at the vertices,
which we call admissible.

\begin{definition}
A metric graph $ \Gamma $ with delta couplings $ \alpha_j $ at the vertices is called
{\bf admissible} if and only if:
\begin{itemize}
\item the second eigenvalue $ \lambda_2 (\Gamma) $ lies below all singularities 
of the M-function. Since the lowest singularity is $\left( \frac{\pi}{\ell_{\rm max}} \right)^2$ this means that
\begin{equation} \label{eq9}
\lambda_2 (\Gamma) <  \left( \frac{\pi}{\ell_{\rm max}} \right)^2; 
\end{equation}
\item the matrix $ \mathbf Q(\lambda_2) = \mathbf M (\lambda_2 (\Gamma)) - {\rm diag}\, \{\alpha_j \} $
possesses the Strong Arnold Property.
\end{itemize}
\end{definition}

Here $ \ell_{\rm max} $ is the length of the longest edge:
$ \ell_{\rm max} = \max_{i \neq j}  \ell_{ij}. $

Condition \eqref{eq9} is required 
in order to guarantee that the value of the eigenfunction on an edge is
uniquely determined by its values at the end points. In general we have only the estimate $ \lambda_2 (\Gamma) \leq \left( \frac{2 \pi}{\ell_{\rm max}} \right)^2 $
valid for Laplacians with any vertex conditions. Condition \eqref{eq9} implies that the non-diagonal elements of the matrix $ \mathbf Q(\lambda_2) $ are nonnegative.

\subsubsection*{On Strong Arnold Property for metric graphs}  
The Strong Arnold Property (SAP) is crucial for selecting matrices appearing in the Colin de Verdi\`ere criterium and it has already been
discussed for discrete Laplacians  in Section \ref{SecCdV}. This property has a remarkable interpretation for metric graphs
connected with the zero sets of the eigenfunctions.

The zero sets of eigenfunctions of differential operators on metric graphs may have non-zero measure:
such functions vanish on whole edges. This property distinguishes
differential operators on metric graphs from conventional differential operators on smooth manifolds. For example, if the metric graph has a loop,
then there is a series of eigenfunctions supported by this loop and vanishing on the rest of the graph.
More generally, solution of the eigenfunction equation on each edge is determined by two constants
(as a solution to the second order differential equation), hence if an eigenvalue has multiplicity at least three, then 
 one may always find an eigenfunction vanishing on any  edge in the graph. 
Under condition \eqref{eq9} vanishing of the eigenfunction at any two neighbouring vertices guarantees
that the eigenfunction is identically zero on the edge between these two vertices.

With any vertex $ v^0 $ in the metric graph $ \Gamma $ one naturally associates the (metric) star graph formed
by $ v^0 $ and all adjacent edges together with adjacent vertices. Every graph $ \Gamma $ has precisely $ M$
such star subgraphs, which we consider as embedded into the original metric graph. Some of these star subgraphs degenerate into 
single intervals in the case of degree one vertices.
To guarantee that adjacent to $ v^0$ edges form a star subgraph, it is necessary and sufficient to assume 
that the original graph is simple (no loops or multiple edges), otherwise the adjacent to $ v^0$ edges may form loops or cycles in $ \Gamma$.

Consider for example the star graph of degree $6$ (see Fig. \ref{FigStar}). This graph has $7$ vertices and hence $7$ star-subgraphs: 
\begin{itemize}
\item the graph $ S_6 $ itself corresponding to the central vertex $ v^5$;
\item $6$ single edge graphs corresponding to the degree one vertices $ v^1, v^2, v^3, v^4. v^5, $ and $v^6$.
\end{itemize}

\begin{figure}[ht]
\centering
\begin{tikzpicture}[scale=0.5,every node/.style={circle,fill=black,inner sep=1.5pt}]

\node[label=right:1] (1) at (2,0){}; 
\node[label=above:2] (2) at (1,1.73){}; 
\node[label=above:3] (3) at (-1,1.73){}; 
\node[label=left:4] (4) at (-2,0){}; 
\node[label=below:2] (5) at (-1,-1.73){}; 
\node[label=below:1] (6) at (1,-1.73){}; 

\node (7) at (0,0) {}; 

\put (-2,-10){$7$}

\draw [thick] (7) to (1);
\draw [thick] (7) to (2);
\draw [thick] (7) to (3);
\draw [thick] (7) to (4);
\draw [thick] (7) to (5);
\draw [thick] (7) to (6);

\end{tikzpicture}
\caption{The star graph $ S_6$.}
\label{FigStar}
\end{figure}
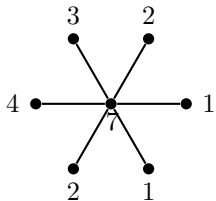
\vspace{-2mm}

Let us show that if the graph is equilateral, say has  edge lengths $1$, then the maximal multiplicity of the second eigenvalue
can be equal to $5$.  Let us assume standard vertex condition
at $ v^7 $ and  delta couplings $ \partial u (v^j) = -  u (v^j), \, j =1,2, \dots, 6, $ att degree one vertices (see Fig, \ref{FigStar}).
The eigenfunctions corresponding to $ \lambda_2 = 0 $ are  linear on the edges, vanish
at $ v^7$ and satisfy delta couplings at other vertices.
Parameterizing the edges as $[0,1]$ in the direction away from the central vertex  the eigenfunction on the edges are proporltonal to   $ x$.
To meet standard conditions at $ v^7 $ the eigenfunction should be supported
by at least two edges. We get that  multiplicity of $ \lambda_2 = 0 $ equals $5$, despite that the graph is obviously planar.
The reason is that this metric graph does not satisfy SAP:  among the eigenfunctions corresponding to $ \lambda  = 0 $
one finds functions vanishing on any of the edges, which are (trivial) star subgraphs for $ S_6$.

Determining planarity of graphs vertices of degree one may be ignored: one may always delete these
vertices together with the adjacent edges, repeating, if necessary, the procedure until it stabilizes. One may also remove all degree
two vertices substituting the corresponding two adjacent edges with one single edge.
Assume  from now on that
the graph $ \Gamma $ does not have any  vertex of degree one or two,  therefore all star subgraphs contain at 
least three edges. Multiple eigenvalues (as explained above) always lead to eigenfunctions vanishing on certain edges,
but vanishing on star subgraphs of degree at least $3$ is much more rare.
For example multiplicity $3$ of the eigenvalue guarantees existence of eigenfunctions vanishing on edges, while
existence of eigenfunctions vanishing on $3$-star subgraphs can be guaranteed only if the multiplicity is $4$.

Violation of SAP is related to the existence of eigenfunctions vanishing on the described star subgraphs.

\begin{lemma}  \label{Lemma42}
Let $ \Gamma $ be s simple metric graph without vertices of degree neither one nor two, and
let $ L^{\vec{\alpha}}(\Gamma) $ be the Laplacian on $ \Gamma $ with delta couplings
at the vertices. Let $ \lambda^*$ satisfying
\begin{equation} \label{lest}
 \lambda^* < \left( \frac{\pi}{\ell_{\rm max} (\Gamma)} \right)^2 
 \end{equation}
  be an eigenvalue of $ L^{\vec{\alpha}}(\Gamma) $,
such that there is no two eigenfunctions vanishing on some two star subgraphs of $ \Gamma $, 
then the secular matrix $ \mathbf Q(\lambda^*) $ possesses SAP.
\end{lemma}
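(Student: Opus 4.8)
The plan is to unwind the definition of SAP for the secular matrix $\mathbf Q(\lambda^*)$ and translate the defining equation $\mathbf Q(\lambda^*)\mathbf X = \mathbf 0$, with $\mathbf X\in\Sc_G^\perp$, into a statement about eigenfunctions of $L^{\vec\alpha}(\Gamma)$ that vanish on star subgraphs. Concretely, suppose $\mathbf Q(\lambda^*)$ fails SAP, so there is a nonzero symmetric $\mathbf X=(x_{ij})$ with $x_{ii}=0$ and $x_{ij}=0$ whenever $v^i\sim v^j$, such that $\mathbf Q(\lambda^*)\mathbf X=\mathbf 0$. Each column $\mathbf x^{(k)}$ of $\mathbf X$ lies in $\ker\mathbf Q(\lambda^*)$, hence by Lemma \ref{lem1} (using \eqref{lest}, which guarantees $\lambda^*$ is not a singularity of the M-function) each nonzero column is the vector of vertex values of some eigenfunction $u_k$ of $L^{\vec\alpha}(\Gamma)$ at eigenvalue $\lambda^*$. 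The point of condition \eqref{lest} is the one flagged in the text just before the lemma: below the lowest singularity, an eigenfunction is uniquely determined on each edge by its two endpoint values, so the map (eigenfunction) $\mapsto$ (vertex-value vector) is injective on the $\lambda^*$-eigenspace, and $u_k$ is genuinely nonzero iff $\mathbf x^{(k)}\neq 0$.

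Next I would read off what the support conditions on $\mathbf X$ say about $u_k$. Since $x_{kk}=0$, the eigenfunction $u_k$ vanishes at the vertex $v^k$; since $x_{kj}=0$ for every neighbour $v^j\sim v^k$, it also vanishes at all vertices adjacent to $v^k$. By the uniqueness remark above, $u_k$ then vanishes identically on every edge incident to $v^k$ — that is, $u_k$ vanishes on the whole star subgraph $S_{v^k}$ centred at $v^k$. (Here the hypothesis that $\Gamma$ has no vertices of degree one or two is used so that these stars are honest subgraphs with at least three edges, matching the setup of the lemma; for the logical skeleton of the argument what matters is just that $u_k$ vanishes on the closed star of $v^k$.) Now since $\mathbf X\neq\mathbf 0$, there is at least one column $\mathbf x^{(k)}\neq 0$, giving one eigenfunction $u_k$ vanishing on the star $S_{v^k}$. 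To produce a \emph{second} such eigenfunction — contradicting the hypothesis of the lemma — I would argue as follows: $\mathbf X$ is symmetric and nonzero with zero diagonal, so it has rank at least two (a nonzero symmetric matrix with zero diagonal cannot have rank one, since a rank-one symmetric matrix is $\pm\,\mathbf v\mathbf v^{\mathsf T}$ whose diagonal entries $\pm v_i^2$ cannot all vanish unless $\mathbf v=0$). Hence at least two of its columns, say $\mathbf x^{(k)}$ and $\mathbf x^{(l)}$, are linearly independent and in particular both nonzero. These yield two eigenfunctions $u_k,u_l$ at $\lambda^*$ vanishing on the star subgraphs $S_{v^k}$ and $S_{v^l}$ respectively. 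Moreover $u_k\neq u_l$: if $v^k\neq v^l$ then $u_k$ vanishes on the edges at $v^k$ while, were $u_k=u_l$, it would vanish on the edges at $v^l$ too, and one checks this forces more vanishing than the independent columns allow — more cleanly, $u_k=u_l$ would mean $\mathbf x^{(k)}=\mathbf x^{(l)}$, contradicting independence. So we have exhibited two eigenfunctions vanishing on two (distinct) star subgraphs, contradicting the hypothesis; therefore $\mathbf Q(\lambda^*)$ has SAP.

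I expect the main obstacle to be the bookkeeping in the last step: making precise the claim "there are two eigenfunctions vanishing on two star subgraphs" in exactly the form the hypothesis negates. The subtlety is whether "two star subgraphs" is meant as two \emph{distinct} centres $v^k\neq v^l$ or merely as two linearly independent eigenfunctions each vanishing on some star; if the latter, the rank-$\geq 2$ observation finishes it immediately, while if the former one must also rule out the degenerate possibility that every nonzero column of $\mathbf X$ is supported so as to force vanishing on a single common star, which again comes down to the rank/zero-diagonal argument (a nonzero symmetric zero-diagonal matrix cannot be supported on a single row-and-column, nor be rank one). A secondary point to handle carefully is the injectivity of eigenfunction $\mapsto$ boundary-value vector under \eqref{lest}: this is exactly where the Dirichlet-to-Neumann M-function picture of Lemma \ref{lem2} is invoked, since for $\lambda^*$ strictly below $(\pi/\ell_{\max})^2$ no edge is at a Dirichlet eigenvalue and the two endpoint values determine the solution of $-u''=\lambda^* u$ on that edge. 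Everything else is a direct transcription of the definitions of $\Sc_G^\perp$ and SAP.
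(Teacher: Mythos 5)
Your proof is correct and follows essentially the same route as the paper: translate $\mathbf Q(\lambda^*)\mathbf X=\mathbf 0$ with $\mathbf X\in\Sc_G^\perp$ into eigenfunctions vanishing on closed star subgraphs via condition \eqref{lest}, then use the symmetry of $\mathbf X$ to produce two such eigenfunctions. You in fact make explicit a point the paper leaves implicit, namely that a nonzero symmetric matrix with zero diagonal must have rank at least two, which is precisely why two distinct stars appear.
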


\begin{proof}
The secular matrix $ \mathbf Q(\lambda^*) $ -- analog of the matrix $ \mathbf A $, -- does {\bf not} possess SAP only if there exists a
matrix $ \mathbf X \in \Sc_G^\perp ,$ such that $ \mathbf Q(\lambda^*) \mathbf X = 0 .$
Every column in $ \mathbf X $ belongs to the kernel of $ \mathbf Q(\lambda^*) $ and therefore
determines an eigenfunction of the Laplacian on the metric graph for the eigenvalue $ \lambda^*$.
Consider any such eigenfunction, say associated with the first column. In this column all entries corresponding to
the vertices to which $ v^1 $ is adjacent are equal to zero since $ \mathbf X  \in \Sc_G^\perp$. Then inequality 
\eqref{lest} guarantees that the corresponding eigenfunction
on the metric graph is identically equal to zero on the whole star subgraph associated with  $ v^1 $.
The matrix $ \mathbf X $ should be symmetric, hence it can be chosen non-zero only if
at least two metric graph eigenfunctions vanish on two (different) star subgraphs.
\end{proof}

The above Lemma can be strengthened, by requiring that there are no eigenfunctions vanishing on three different star subgraphs of
$ \Gamma $. To prove this one needs to show that any eigenfunction is non-zero on at least three vertices in $ \Gamma$. This follows
from the fact that the graph is simple and has no degree one and two vertices. Of course, inequality \eqref{lest}
should be taken into account.

This lemma may hold only if the multiplicity of the eigenvalue $ \lambda^*$
does not exceed the second lowest vertex degree in $ \Gamma $ by $1$.
For example if the multiplicity of $ \lambda^* $ is at least $4$, then for any vertex of degree $3$, one may always obtain 
eigenfunctions vanishing on the corresponding star graph.

We are going to consider only metric graphs associated with a fixed discrete graph $ G$. As we already mentioned in
the Introduction for certain discrete graphs the second condition in the definition above
is automatically satisfied.

Let us generalise the Colin de Verdi\`ere parameter  for metric graphs as follows:
\begin{definition}
The Colin de Verdi\` ere parameter $ \mu (\Gamma) $ for metric graphs is equal to
the maximal multiplicity of $\lambda_2 (\Gamma) $ for Laplacians  $ L_{\vec{\alpha} \delta} (\Gamma) $ with standard delta couplings at the vertices when one varies
over all admissible metric graphs associated with the same discrete graph.
\end{definition}

This graph parameter can be used to determine planarity of the graphs, similar to the
classical Colin de Verdi\` ere number.

\begin{theorem}  \label{ThST} The Colin de Verdi{\`e}re graph parameters $\mu(G)$ and $ \mu (\Gamma) $ are equal, 
provided $ G $ is the discrete graph associated with the metric graph $ \Gamma$.
\end{theorem}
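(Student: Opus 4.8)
The plan is to prove the two inequalities $\mu(\Gamma)\le\mu(G)$ and $\mu(G)\le\mu(\Gamma)$ separately, in both directions exploiting the correspondence $\mathbf A=-\mathbf Q(\lambda_2)$ between a Colin de Verdi\`ere matrix and the secular matrix $\mathbf Q(\lambda_2)=\mathbf M(\lambda_2)-{\rm diag}\{\alpha_j\}$. The sign flip is immaterial for the Colin de Verdi\`ere conditions: $\mathbf A$ and $-\mathbf A$ have the same kernel, $\mathbf A\mathbf X=\mathbf 0$ iff $(-\mathbf A)\mathbf X=\mathbf 0$ (so they share the Strong Arnold Property), and they have opposite inertia, so ``exactly one negative eigenvalue'' for $\mathbf A$ is ``exactly one positive eigenvalue'' for $-\mathbf A$. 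We keep the normalisation $\xi_2=0$ of Section \ref{SecCdV}, so that being a Colin de Verdi\`ere matrix for $G$ means: real symmetric, the prescribed off-diagonal sign pattern, exactly one negative eigenvalue, and SAP, the parameter being $\dim\ker$.

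\emph{Step 1: $\mu(\Gamma)\le\mu(G)$.} Let $\Gamma$ be an admissible metric graph associated with $G$ for which the multiplicity of $\lambda_2(\Gamma)$ is maximal, and set $\mathbf A:=-\mathbf Q(\lambda_2(\Gamma))$; it is real symmetric. Admissibility gives $\lambda_2(\Gamma)<(\pi/\ell_{\mathrm{max}})^2\le(\pi/\ell_{ij})^2$ for every edge, so the off-diagonal entry $M_{ij}(\lambda_2)=\sqrt{\lambda_2}/\sin(\sqrt{\lambda_2}\ell_{ij})$ of $\mathbf M(\lambda_2)$ — understood by continuity as $1/\ell_{ij}$ when $\lambda_2=0$ and as $\sqrt{|\lambda_2|}/\sinh(\sqrt{|\lambda_2|}\ell_{ij})$ when $\lambda_2<0$ — is strictly positive for $v^i\sim v^j$ and vanishes for $v^i\not\sim v^j,\ i\ne j$; hence $\mathbf A$ has the off-diagonal structure of $\Sc_G$, the diagonal being arbitrary real. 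Since we are in the first case (the only one considered, see the discussion after Lemma \ref{lem2}), $\mathbf Q(\lambda_2(\Gamma))$ has $0$ as its second largest eigenvalue, hence exactly one positive eigenvalue, so $\mathbf A$ has exactly one negative eigenvalue; admissibility also gives SAP for $\mathbf Q(\lambda_2(\Gamma))$, hence for $\mathbf A$; and by Lemma \ref{lem1}, $\dim\ker\mathbf A=\dim\ker\mathbf Q(\lambda_2(\Gamma))$ equals the multiplicity of $\lambda_2(\Gamma)$. Thus $\mathbf A$ is a Colin de Verdi\`ere matrix with $\dim\ker\mathbf A$ equal to the maximal multiplicity of $\lambda_2(\Gamma)$, proving $\mu(G)\ge\mu(\Gamma)$.

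\emph{Step 2: construction for $\mu(G)\le\mu(\Gamma)$.} Let $\mathbf A=(a_{ij})$ be a Colin de Verdi\`ere matrix realising $\mu(G)$, so $\dim\ker\mathbf A=\mu(G)$, $\mathbf A$ has one negative eigenvalue, SAP holds, $a_{ij}<0$ for $v^i\sim v^j$ and $a_{ij}=0$ otherwise. Choose $\varepsilon$ with $0<\varepsilon<\min_{v^i\sim v^j}|a_{ij}|$, put $\lambda_2:=\varepsilon^2$, and define the edge lengths by $\ell_{ij}:=\varepsilon^{-1}\arcsin(\varepsilon/|a_{ij}|)\in(0,\pi/2)$. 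Then $\sqrt{\lambda_2}\,\ell_{ij}=\arcsin(\varepsilon/|a_{ij}|)$, so $\sin(\sqrt{\lambda_2}\ell_{ij})=\varepsilon/|a_{ij}|$ and $M_{ij}(\lambda_2)=|a_{ij}|=-a_{ij}$, i.e.\ the off-diagonal entries of $\mathbf M(\lambda_2)$ match those of $-\mathbf A$; moreover $\sqrt{\lambda_2}\,\ell_{\mathrm{max}}<\pi/2$, so $\lambda_2<(\pi/\ell_{\mathrm{max}})^2$ and $\lambda_2$ lies below every singularity of the M-function. The diagonal entries $M_{ii}(\lambda_2)$ are now fixed real numbers, and taking the standard delta couplings $\alpha_i:=M_{ii}(\lambda_2)+a_{ii}$ gives $\mathbf M(\lambda_2)-{\rm diag}\{\alpha_j\}=-\mathbf A$, i.e.\ \eqref{eq2} holds for the metric graph $\Gamma$ with these lengths and couplings.

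\emph{Step 3: $\lambda_2$ is the second eigenvalue and $\Gamma$ is admissible.} This verification is the main obstacle. On the singularity-free half-line $J:=(-\infty,(\pi/\ell_{\mathrm{max}})^2)$ one has $\mathbf Q'(\lambda)=\mathbf M'(\lambda)>0$ strictly: for every edge $\mathbf e$ and every $v\ne 0$ the scalar Herglotz function $\lambda\mapsto\langle\mathbf M_{\mathbf e}(\lambda)v,v\rangle$ is non-constant, since by \eqref{7} it tends to $-\infty$ as $\lambda\to-\infty$, hence strictly increasing on $J$, so the $2\times2$ block $\mathbf M_{\mathbf e}'(\lambda)$ is positive definite, and by connectedness of $\Gamma$ so is the sum $\mathbf M'(\lambda)$. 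Consequently the $M$ eigenvalue curves of $\mathbf Q(\lambda)$ are strictly increasing on $J$; since all of them tend to $-\infty$ as $\lambda\to-\infty$ (same asymptotics as in Lemma \ref{lem2}), a curve meets the level $0$ within $(-\infty,\lambda_2]$ if and only if its value at $\lambda_2$ is $\ge 0$, and then exactly once. At $\lambda=\lambda_2$ the number of curves with value $\ge 0$ is $n_+(\mathbf Q(\lambda_2))+n_0(\mathbf Q(\lambda_2))$, where $n_+(\cdot),n_0(\cdot)$ count positive and zero eigenvalues; since $\mathbf Q(\lambda_2)=-\mathbf A$ this equals $1+\mu(G)$. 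Therefore the total multiplicity of eigenvalues of $L^{\vec\alpha}(\Gamma)$ in $(-\infty,\lambda_2]$ is $1+\mu(G)$, while by Lemma \ref{lem1} the multiplicity of $\lambda_2$ itself is $\dim\ker\mathbf Q(\lambda_2)=\dim\ker\mathbf A=\mu(G)$; hence exactly one unit of multiplicity lies in $(-\infty,\lambda_2)$, so $L^{\vec\alpha}(\Gamma)$ has a single, simple eigenvalue $\lambda_1<\lambda_2$ below $\lambda_2$, and $\lambda_2(\Gamma)=\lambda_2$ with multiplicity $\mu(G)$. Finally $\lambda_2<(\pi/\ell_{\mathrm{max}})^2$ and $\mathbf Q(\lambda_2)=-\mathbf A$ inherits SAP from $\mathbf A$, so $\Gamma$ is admissible. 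This yields $\mu(\Gamma)\ge\mu(G)$, and together with Step 1, $\mu(G)=\mu(\Gamma)$.
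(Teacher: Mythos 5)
Your proposal is correct and follows essentially the same route as the paper: realising $\mathbf A=-\mathbf Q(\lambda_2)$ by choosing $\lambda_2$ with $\sqrt{\lambda_2}<|a_{ij}|$, solving $a_{ij}=-\sqrt{\lambda_2}/\sin(\sqrt{\lambda_2}\ell_{ij})$ for the lengths and matching the diagonal with the coupling constants, and conversely reading off a Colin de Verdi\`ere matrix from an optimal admissible graph. Your Step 3 merely spells out, via the monotone eigenvalue curves of $\mathbf Q(\lambda)$, the counting argument the paper compresses into one sentence (and your handling of the cases $\lambda_2\le 0$ in Step 1 is a small extra precaution the paper leaves implicit).
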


\begin{proof}
Our goal is to prove that $ \mu(G) = \mu (\Gamma) $. We divide the proof into two steps:
\begin{enumerate}
\item $ \mu (G) \leq \mu (\Gamma) .$
Let $\mathbf{A}$ be a Colin de Verdi{\`e}re matrix with maximal multiplicity of the second eigenvalue. We pick a positive number $\lambda_2$ such that $\sqrt{\lambda_2}<|a_{ij}|$ for all $v^i\sim v^j$ and then we 
construct an operator $L_{\vec{\alpha} \delta}$ with $\lambda_2$ as its second eigenvalue and equation \eqref{eq2} is satisfied.

Consider any edge, say between two vertices $ v^i $ and $ v^j $. The corresponding entries of the matrices $ - \mathbf Q (\lambda_2) $ and $ \mathbf A $ are given by
$$ -\frac{\sqrt{\lambda_2}}{\sin(\sqrt{\lambda_2} \ell_{ij})} \quad \mbox{and} \quad a_{ij} < 0,$$
respectively.
The function $ \ell \mapsto \frac{\sqrt{\lambda_2}}{\sin(\sqrt{\lambda_2} \ell )}$ with  the domain $\ell \in (0,\frac{\pi}{\sqrt{\lambda_2}})$ has the range $[\sqrt{\lambda_2},\infty)$, and  
hence there is $ \ell_{ij} $ making the entries equal
\begin{equation} \label{equal}
a_{ij} = -\frac{\sqrt{\lambda_2}}{\sin(\sqrt{\lambda_2} \ell_{ij})} ,
\end{equation}
 since we have chosen $ \lambda_2 $ satisfying $\sqrt{\lambda_2}<|a_{ij}|$.
Note that there are two possible values of $ \ell_{ij} $, which can be chosen arbitrarily.

In this way we get a metric graph $ \Gamma $ so that
the matrices $-\mathbf{Q}(\lambda_2)$ and $\mathbf{A}$ have identical entries outside the diagonal. 
To make the two matrices identical $-\mathbf{Q}(\lambda_2)=\mathbf{A}$
we adjust 
the values of  the $\delta$-couplings in $ L_{\vec{\alpha}\delta}$.
Since all singularities of $ \mathbf{Q}$ are to the right of $ \lambda_2$
(condition \eqref{eq9}) and $\xi=0$ is the second lowest eigenvalue of $\mathbf{A}$, $\lambda_2$ is the second eigenvalue of $ L_{\vec{\alpha} \delta}$. $ \mathbf{Q}(\lambda_2)$ possesses the Strong Arnold 
Property since $\mathbf{A}$ does. This accomplishes the proof of the first part.

\item $ \mu (G) \geq \mu(\Gamma) .$ Consider one of the operators $ L_{\vec{\alpha}\delta}$, which maximises 
the multiplicity of $ \lambda_2 $ among admissible metric graphs with standard delta couplings. Then taking
$ \mathbf{A} =  - \mathbf Q(\lambda_2) $
we obtain a Colin de Verdi{\`e}re matrix.
Equation \eqref{eq9}
 implies that $ \sin \sqrt{\lambda}_2 \ell_{ij} >0 $, which in turn implies that non-diagonal elements of $ -\mathbf Q(\lambda_2) $
 are negative for $ v^i \sim v^j$. Moreover $ \xi = 0 $ is the second smallest
eigenvalue of $ \mathbf{A}$. The second assertion follows.

\end{enumerate}

\end{proof}

The graph parameter $ \mu (\Gamma) $ is determined by the discrete graph associated with $ \Gamma$. The advantage 
of introducing this new parameter is the possibility to give geometric interpretation for the
entries of the Colin de Verdi\`ere matrix $ \mathbf A$.

Of course trying to generalise the Colin de Verdi\`ere parameter we first tried with standard Laplacians
on metric graphs (corresponding to coupling constants $ \alpha_j = 0$). It appears that this family of
metric graphs is not rich enough.

\section{Delta couplings are necessary} \label{SecDelta}

One may ask the following question: Is it possible to get a generalisation of Colin de Verdi{\`e}re criterion using just standard Laplacians without
delta interactions at the vertices. In what follows we present a counterexample showing that there is a non-planar graph with the highest
multiplicity of $ \lambda_2 $ less or equal to $3$.

Consider the graph $K_{3,3}$. Label the vertices in the first set by $1,2,3$ and in the second set by $4,5,6$. 
Form a new graph $\Gamma$ by adding an edge between the vertices $2$ and $3$. We want to show that the second eigenvalue has maximal multiplicity 3. 
By scaling the edges we may assume that $\lambda _2=1$. This implies that at most one edge can have length $\geq \pi$ and no edge can have length $\geq 2\pi$.
Otherwise $ \lambda_2 < 1 $ as there exists a trial function, which is not an eigenfunction, orthogonal to $ \psi_1 \equiv 1 $ and having Rayleigh quotient equal to $ 1$
(the proof follows the same lines as the proof of Lemma \ref{lemmaW}).
\vspace{-2mm}

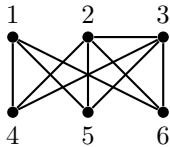
\begin{figure}[ht]
\centering
\begin{tikzpicture}[scale=0.5,every node/.style={circle,fill=black,inner sep=1.5pt}]

\node[label=above:1] (1) at (0,2){}; 
\node[label=above:2] (2) at (2,2){}; 
\node[label=above:3] (3) at (4,2){}; 
\node[label=below:4] (4) at (0,0) {}; 
\node[label=below:5] (5) at (2,0) {}; 
\node[label=below:6] (6) at (4,0) {}; 

\draw [thick] (1) to (4);
\draw [thick] (1) to (5);
\draw [thick] (1) to (6);
\draw [thick] (2) to (4);
\draw [thick] (2) to (5);
\draw [thick] (2) to (6);
\draw [thick] (3) to (4);
\draw [thick] (3) to (5);
\draw [thick] (3) to (6);

\draw [thick] (2) to (3);
\end{tikzpicture}
\caption{The graph $K_{3,3}$ with an added edge.}
\label{Fig2.3}
\end{figure}
\vspace{-2mm}

 If one edge has length $\pi$ then we shall check that the maximal multiplicity of $1$ is at most $3$, this will allow us to assume that no edge has length $\pi$. 
 We need to consider three possibilities:

\centerline{ (1) $ \ell_{23} = \pi$; \quad (2) $ \ell_{14} = \pi$; \quad (3) $ \ell_{36} = \pi$.}

 As illustration consider the first case. Assume for the sake of contradiction that the multiplicity of
$ \lambda_2 $ is at least $ 4$. Then there exists a not identically equal to zero eigenfunction equal to zero
at the vertices $ v^3,v^5,v^6$. Looking at the edge between $ v^2$ and $ v^3 $ we conclude that the function is equal
to zero also  at $ v^2$. The function must be identically equal to zero on the edges $ E_{25}, E_{35}, E_{36}, E_{26} $ since their
lengths are less than  $ \pi$. Consider the vertex $ v^5 $. All except one normal derivatives
are zero there, so the eigenfunction on the edge $E_{15}$ satisfies trivial Cauchy data
at the end point belonging to $ v^5$. Hence the eigenfunction is identically equal to zero on the edge $E_{15}$.
Similar reasoning implies that the eigenfunction is zero on the edge $ E_{16}$. Considering the vertex $ v^1 $ we conclude
that the eigenfunction is zero on the edge $E_{14}$, in particular it is zero at the vertex $ v^4$.
The edges $E_{24}$ and $ E_{34} $ have length less than $ \pi $, hence the eigenfunction is zero on these edges.
We conclude finally that the eigenfunction must be equal to zero on the edge $E_{23}$
and therefore is trivial. Our assumption was wrong.

 Consider now the general case $ \ell_{ij} \neq \pi$.  The M-function is of the form
$$ \small \begin{pmatrix}
m_{11} & 0  & 0 & m_{14} &m_{15} & m_{16}\\
0 & m_{22}  & m_{23} & m_{24} &m_{25} & m_{26}\\
0 & m_{23}  & m_{33} & m_{34} &m_{35} & m_{36}\\
m_{14} &m_{24} & m_{34} &m_{44}&0&0\\
m_{15} &m_{25} & m_{35} &0&m_{55}&0\\
m_{16} &m_{26} & m_{36}&0&0 &m_{66}\\
\end{pmatrix} =: \begin{pmatrix}\vec{m}_1 & \vec{m}_2& \vec{m}_3&\vec{m}_4 & \vec{m}_5& \vec{m}_6\end{pmatrix}
$$
and all entries are finite.\footnote{The form of the M-function with several entries equal to zero is not enough to prove that the multiplicity
 of the zero eigenvalue cannot be equal to $4$. Counterexample is given by  the matrix
 $$ \tiny \begin{pmatrix}
-1 & 0  & 0 & 1 & 1 & 1\\
0 & 1  & 1 & 1 & 1 & 1\\
0 & 1  & 1 & 1 & 1 & 1\\
1 &1 & 1 & 0&0&0\\
1 &1 & 1 & 0&0&0\\
1 &1 & 1 & 0&0&0\\
\end{pmatrix} .$$ 
}

To get multiplicity 4 only two columns of this matrix can be independent. Clearly $\vec{m}_1$ and $\vec{m}_4$ are independent, since $ m_{ij} = \frac{1}{\sin \ell_{ij}} \neq 0, \; i \neq j $.
We see that $\vec{m}_5,\vec{m}_6$ must be multiples of $\vec{m}_4$. 
Thus it is necessary that $m_{44}=m_{55}=m_{66}=0$. Remember that only one edge can be longer than $ \pi $. Hence only one $ m_{ij}, i \neq j , $ in question can be negative,
so there are $a,b>0$ such that $a\vec{m}_4=\vec{m}_5$, $b\vec{m}_4=\vec{m}_6$.

Since $m_{44}=0$ we have that
\begin{equation} \label{cot}
\cot(\ell_{14})=-(\cot(\ell_{24})+\cot(\ell_{34})).
\end{equation}
We note that for $i=1,2,3$
\begin{equation} \label{eq14}
\frac{1}{\sin(\ell_{i5})}=\frac{a}{\sin(\ell_{i4})}=a\sqrt{1+\cot^2(\ell_{i4})}
\end{equation} and since $m_{55}=0$ we get
$$0=\pm\sqrt{a^2(1+\cot^2(\ell_{14}))-1}\pm\sqrt{a^2(1+\cot^2(\ell_{24}))-1}\pm\sqrt{a^2(1+\cot^2(\ell_{34}))-1}$$
With equation \eqref{cot} the possible solutions are $a^2=1$ and 
$$a^2=\frac{3}{3+2(\cot^2(\ell_{14})+\cot^2(\ell_{24})+\cot^2(\ell_{34}))}\leq 1$$
but $a^2=1$ is the only solution that leads to real valued lengths.
This is because equation \eqref{eq14} leads to
$$ 
\frac{1}{\sin(\ell_{i5})} =  \frac{ \sqrt{1+\cot^2(\ell_{i4})}}{\sqrt{1+ 2\frac{\cot^2(\ell_{14})+ \cot^2(\ell_{24}) + \cot^2(\ell_{34})}{3}}} . 
$$
If we do not have $\cot^2(\ell_{14})=\cot^2(\ell_{24})=\cot^2(\ell_{34})=0$ then for at least one $ i $ we get  $ \frac{1}{\sin(\ell_{i5})}  < 1$, which is impossible. We conclude that $a^2=1$ is the only solution{, so $a=1$. By the same argument $b=1$.

This shows that $\vec{m}_4=\vec{m}_5=\vec{m}_6$. One can see that if $\vec{m}_2$ and $\vec{m}_3$ are going to be linear combinations of $\vec{m}_1,\vec{m}_4$ then we need to have 
$\frac{m_{11}}{m_{14}^2}=\frac{-m_{22}}{m_{24}^2}=\frac{-m_{33}}{m_{34}^2}.$
This is equivalent to
$$3\frac{\cot(\ell_{24})+\cot(\ell_{34})}{1+(\cot(\ell_{24})+\cot(\ell_{34}))^2}=\frac{\cot(\ell_{23})+3\cot(\ell_{24})}{1+\cot(\ell_{24})^2}=\frac{\cot(\ell_{23})+3\cot(\ell_{34})}{1+\cot(\ell_{34})^2}.$$
The last equation gives us that $\cot(\ell_{23})=3\frac{1-\cot(\ell_{24})\cot(\ell_{34})}{\cot(\ell_{24})+\cot(\ell_{34})}$ so 
$$3\frac{\cot(\ell_{24})+\cot(\ell_{34})}{1+(\cot(\ell_{24})+\cot(\ell_{34}))^2}=\frac{3}{\cot(\ell_{24})+\cot(\ell_{34})}.$$
However that equation clearly does not have any solution.

 We conclude that without taking into account delta couplings, the maximal multiplicity of the second eigenvalue for non-planar graphs may be less than $4$.

\section{Weighted delta couplings} \label{SecWDC}

Let us study whether other vertex conditions than standard delta couplings can be used to generalise
Colin de Verdi{\`e}re theory. 
 For a function $ u \in W_2^2 (\Gamma)  $ let $\vu^j$ 
 be the $d_j$-dimensional vector of values of $u$ at $v^j$ and $\de\vu^j$ to be the $d_j$-dimensional  vector of normal derivatives at $v^j$:
 \begin{equation}
 \vu^j = \{ u(x_i) \}_{x_i \in v^j}, \quad \quad \de\vu^j = \{ (-1)^{i+1} u' (x_i) \}_{x_i \in v^j}. 
 \end{equation}
Here $ d_j $ is the degree of the vertex $ v^j$ -- the number of edges connected at $ v^j$.

Then the most general vertex conditions are given by $ d_j \times d_j $ dimensional matrices $ A^j $ and $ B^j $ 
\begin{equation}
A^j \vu^j = B^j \de\vu^j, \quad j =1,2, \dots, M.
\end{equation}
The matrices $ A^j, B^j $ should satisfy the two conditions
$$  {\rm rank}\, (A^j, B^j) = d_j, \quad A^j (B^j)^* = B^j (A^j)^*. $$
Moreover it is important that the matrices properly reflect the connectivity of the graph, {\it i.e.} the same conditions 
at  a vertex $ v^j $ cannot be written with $ A^j $ and $ B^j $ having
the same block-diagonal form and therefore corresponding to a graph where the vertex $ v^j $ is chopped into two vertices.

The corresponding operator is self-adjoint, bounded from below and has discrete spectrum. Spectral properties of this operator are described in detail in \cite{BeKu,PK24}.

We have already mentioned that Colin de Verdi{\`e}re theory uses Perron-Frobenius theorem and   nodal domains of the eigenfunctions.
Perron-Frobenius property of Laplacians on metric graphs is governed by
 the vertex conditions, all vertex conditions leading to positive ground state have been characterised in \cite{Ku19,PK24}.
These conditions, called {\bf generalised delta couplings}, require in particular that all possible values of $ \vu^j$
are spanned by several vectors with positive coordinates and disjoint supports. In order to be able to introduce nodal domains we need that function values
at each vertex have a definite sign, therefore only weighted delta couplings with function values proportional to a single
vector with positive entries should be allowed. One may say that functions satisfy weighted continuity condition in this case, hence
the corresponding vertex conditions will be called  weighted delta couplings.

\begin{definition} \label{defgdc}
Let $ \vec{c}^j $ be a $d_j$-dimensional vector with positive entries and $ \alpha^j  \in \mathbb R $ be a real coupling parameter. Then  
{\bf weighted delta coupling} at the vertex $ v^j $ is given by the following conditions:
\begin{equation} \label{14}
\left\{
\begin{array}{l}
\vu^j \parallel \vec{c}^j,  \\[3mm]
\langle \vec{c}^j, \de\vu^j \rangle = \alpha^j  \frac{\langle \vec{c}^j, \vu^j \rangle}{\|\vec{c}^j\|^2}.
\end{array} \right.
\end{equation}
\end{definition}

The first condition can be seen as weighted continuity at the vertex.
It is natural to introduce the average value of the function at the vertex:
\begin{equation}
u_w (v^j) := \frac{ \langle \vec{c}^j, \vec{u}^j \rangle}{\| \vec{c}^j \|^2}. 
\end{equation}
Taking into account \eqref{14} we have
$ u_w(v^j)  = \frac{1}{\|\vec{c}^j\|^2}\sum_{x_i \in v^j} (\vec{c}^j)_i u(x_i) = \frac{u(x_{i_0})}{(\vec{c}^j)_{i_0}} ,$
for any $ i_0 \in v^j $.
In what follows we shall always assume that weighted delta conditions are satisfied at each vertex of $ \Gamma$. 
One obtains standard delta conditions by letting $ \vec{c}^j = (1,1, \dots, 1), \; \forall j. $

To introduce the M-function we define
\begin{equation}
\partial u_w (v^j) = \langle \vec{c}^j, \vec{u}^j \rangle, \quad m = 1,2, \dots, M.
\end{equation}
Then the M-function is given by:
\begin{equation}  \label{eqMw}
\small \mathbf M (\lambda):
\left(
\begin{array}{c}
u_w (v^1)  \\
u_w (v^2)  \\
\vdots \\
u_w (v^M) 
\end{array}
\right)  \mapsto \left(
\begin{array}{c}
\partial u_w (v^1) \\
\partial u_w (v^2)  \\
\vdots \\
\partial u_w (v^M) \end{array}
\right) ,
\end{equation}
instead of \eqref{eqM}.  It will be convenient to introduce the $ M \times M $ matrix $ \mathbf C $ determined by the vectors $  \vec{c}^i $
as follows
\begin{equation}
 c_{ij} =
 \left\{
 \begin{array}{ll}
 (\vec{c}^i)_n, & \mbox{if $ x_n \in v^i $ is the end point on the edge to  $ v^j$, } \\[3mm]
 0, & \mbox{if there is no edge between $ v^i $ and $ v^j$}.
 \end{array}
 \right.
\end{equation}
Repeating the calculations from Section \ref{Sec3} we get the M-function for the graph with weighted continuity conditions:
\begin{equation}
\small \left(\mathbf M (\lambda) \right)_{ij} =
\left\{
\begin{array}{ll}
- \displaystyle \sum_{v^m \sim v^i} c_{im}^2 \; \sqrt{\lambda} \cot \sqrt{\lambda} \ell_{im}, & j =i; \\[3mm]
 \displaystyle c_{ij} c_{ji} \; \frac{\sqrt{\lambda}}{\sin \sqrt{\lambda} \ell_{ij}} , & v^i \sim v^j; \\[3mm]
\displaystyle  0, &  v^i \not\sim v^j,j \neq i. \\[3mm]
 \end{array} \right.
\end{equation}

Lemma \ref{lem1} holds without any modification, while in Lemma \ref{lem2} one has to modify the asymptotics \eqref{asev} as
$
 \xi_j \sim - \left(\sum_{v^j \sim v^i} c_{ij}^2\right) \sqrt{|\lambda|}, \quad \lambda \rightarrow - \infty.
$

\section{Equilateral graphs with weighted continuity at the vertices} \label{SecEL}

In this section we shall discuss how to check planarity of equilateral metric graphs with weighted
delta couplings. 
Without loss of generality we assume that all edges have length equal to $1$:
$ \ell_{ij} = 1 .
$
An important class of weighted continuity conditions is when we associate one and the same weight with every
edge, {\it i.e.} the entries of the matrix $ C $ are chosen so that
$
c_{ij} = c_{ji}.
$
This will allow us to keep the number of parameters 
as in \eqref{parameters}.

In the considered case the singularities of the M-function always lie above the second eigenvalue $ \lambda_2 (\Gamma).$

\begin{lemma} \label{lemmaW}
Let $\Gamma$ be an equilateral graph (lengths $\ell_{ij}=1$) with weighted delta couplings. If $\Gamma$ is not given by a single interval then
$\lambda_2<\pi^2.$
\end{lemma}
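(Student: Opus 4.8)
The plan is to bound $\lambda_2$ from above by constructing an explicit trial function $f$ that is admissible for the variational characterization of $\lambda_2$ — that is, $f$ lies in the form domain of the weighted-delta Laplacian, $f$ is orthogonal to the ground state $\psi_1$, and the Rayleigh quotient $\mathcal R[f]=\int_\Gamma |f'|^2 / \int_\Gamma |f|^2$ is strictly less than $\pi^2$. Since $\Gamma$ is not a single interval, it has at least two edges; the idea is to build $f$ supported on (essentially) a single edge, where it looks like a half-period sine with Rayleigh quotient $\pi^2$, and then perturb it slightly so that the quotient drops below $\pi^2$ while still meeting the vertex conditions and the orthogonality constraint.

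First I would recall that the ground state $\psi_1$ of the weighted-delta Laplacian on a connected $\Gamma$ is simple and can be chosen strictly positive (this is the Perron–Frobenius property guaranteed by generalised/weighted delta couplings, as discussed in Section \ref{SecWDC} and cited from \cite{Ku19,PK24}). Next, fix an edge $E_{ij}=[0,1]$ and consider the candidate $g(x)=\sin(\pi x)$ on $E_{ij}$, extended by zero to the rest of $\Gamma$. This $g$ vanishes at both endpoints, so the weighted continuity condition $\vu^k\parallel\vec c^k$ is trivially satisfied at $v^i$ and $v^j$ (all relevant values are $0$); however $g$ does not satisfy the derivative part of the delta condition at $v^i$ or $v^j$ because $g'(0)=\pi\neq 0$ — so $g\notin W_2^2$, but $g$ \emph{is} in the form domain, which for delta couplings only requires weighted continuity of the values, not any derivative condition. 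Thus $g$ is form-admissible with $\mathcal R[g]=\pi^2$. The key point: $g$ is not an eigenfunction (it fails the derivative matching), so by the standard argument — if a form-admissible function attains the infimum of the Rayleigh quotient over the codimension-one subspace $\{f\perp\psi_1\}$ it must be a genuine eigenfunction — either $\mathcal R[g]$ already drops once we project out $\psi_1$, or we perturb.

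Concretely I would proceed as follows. If $g$ is already orthogonal to $\psi_1$, then $\lambda_2\le \mathcal R[g]=\pi^2$ with equality only if $g$ is an eigenfunction; since it is not, the inequality is strict. If $\langle g,\psi_1\rangle\neq 0$, consider $f=g-\frac{\langle g,\psi_1\rangle}{\|\psi_1\|^2}\psi_1$; this is form-admissible (the form domain is a subspace and $\psi_1$ lies in it) and orthogonal to $\psi_1$, so $\lambda_2\le \mathcal R[f]$. A direct computation using $\int_\Gamma |\psi_1'|^2=\lambda_1\|\psi_1\|^2$ and $\lambda_1<\pi^2$ (since $\lambda_1\le\lambda_2$ would be circular — instead use $\lambda_1<\lambda_2$ or simply that $\psi_1>0$ forces $\lambda_1$ small; more robustly, the quadratic form of $f$ equals $\int|g'|^2-\lambda_1\frac{|\langle g,\psi_1\rangle|^2}{\|\psi_1\|^2}$ and $\|f\|^2=\|g\|^2-\frac{|\langle g,\psi_1\rangle|^2}{\|\psi_1\|^2}$) shows $\mathcal R[f]<\pi^2$ provided $\lambda_1<\pi^2$. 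To secure $\lambda_1<\pi^2$ one can run the same trial-function argument one level down, or note that the constant-on-each-edge-weighted function has Rayleigh quotient $0\le$ the delta contribution, giving $\lambda_1$ below any positive number when couplings are nonpositive — but in general one may have to argue directly. The main obstacle I anticipate is precisely this bookkeeping around $\lambda_1$: one needs $\lambda_1<\pi^2$ strictly, and the cleanest route is to observe that the explicit $g$ above, \emph{before} projection, already certifies $\lambda_2\le\pi^2$, and then a second, slightly modified trial function (e.g. $\sin(\pi x)$ on the chosen edge glued to a small bump spilling onto an adjacent edge, which lowers the quotient by making the effective support longer than $1$) certifies the strict inequality $\lambda_2<\pi^2$ directly, sidestepping delicate estimates on $\lambda_1$. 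Assembling these pieces — form-admissibility of the trial function, the orthogonalization, and the strict drop of the Rayleigh quotient — completes the proof.
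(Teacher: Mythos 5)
There is a genuine gap, and it sits exactly where you anticipated trouble. Your trial function $g=\sin(\pi x)$ on a single edge is sign-definite on its support, so $\langle g,\psi_1\rangle>0$ because $\psi_1>0$; the first branch of your argument therefore never occurs and you are forced into the projection $f=g-\frac{\langle g,\psi_1\rangle}{\|\psi_1\|^2}\psi_1$. But your own formulas give $\mathcal R[f]=\bigl(\pi^2\|g\|^2-\lambda_1 t\bigr)/\bigl(\|g\|^2-t\bigr)$ with $t=|\langle g,\psi_1\rangle|^2/\|\psi_1\|^2>0$, and this is $<\pi^2$ if and only if $\lambda_1>\pi^2$ --- the \emph{opposite} of the condition you state. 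Since in fact $\lambda_1<\pi^2$, projecting out the ground state \emph{raises} the Rayleigh quotient strictly above $\pi^2$, so $f$ certifies nothing: $\mathcal R[g]=\pi^2$ is a convex combination of $\lambda_1$ and $\mathcal R[f]$ with positive weights, so $\lambda_1<\pi^2$ forces $\mathcal R[f]>\pi^2$. The un-projected $g$ does not bound $\lambda_2$ either, since the variational characterization of $\lambda_2$ requires orthogonality to $\psi_1$ or a two-dimensional test space, and the maximum of the quotient over the span of $g$ and $\psi_1$ is again $\mathcal R[f]$. Your fallback (a bump spilling onto an adjacent edge) does not repair this: it lowers the raw quotient to $\pi^2/(1+\epsilon)^2$, but the mandatory orthogonalization then increases it by an amount of order $(\pi^2-\lambda_1)\,t/\|\tilde g\|^2$, which need not be beaten by the $O(\epsilon)$ gain ($\lambda_1$ can be arbitrarily negative for negative couplings, and $t$ is comparable to $\|\tilde g\|^2$).

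The paper's proof avoids all of this with one extra degree of freedom: take \emph{two} distinct edges and set $\varphi=h_1\sin\pi(x-x_1)$ on $E_1$, $\varphi=h_2\sin\pi(x-x_3)$ on $E_2$, and $\varphi=0$ elsewhere. Orthogonality to $\psi_1$ is a single linear condition on the pair $(h_1,h_2)$, so it can be met with both $h_1,h_2\neq 0$; the function still vanishes at every vertex (hence is form-admissible and the vertex terms drop out), and $\mathcal R[\varphi]=\pi^2$ exactly, giving $\lambda_2\le\pi^2$. Strictness then follows by your own (correct) observation that a minimizer over $\{\psi_1\}^\perp$ must be an eigenfunction: since parallel edges are excluded, $E_1$ has an endpoint at a vertex not met by $E_2$, and there the derivative balance of the delta coupling fails, so $\varphi$ is not an eigenfunction and $\lambda_2<\pi^2$.
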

\begin{proof}
Every graph in question has at least two edges, say $ E_1 = [x_1, x_2] $ and $ E_2= [x_3, x_4] $, since we do not allow loops.
Consider the test function $ \varphi $ not identically equal to zero just on these two edges:
$$ \small \varphi (x) = \left\{
\begin{array}{ll}
h_1 \sin \pi (x-x_1), &  x \in e_1, \\[3mm]
h_2 \sin \pi (x-x_3), &  x \in e_2, \\[3mm]
0, & \mbox{otherwise}.
\end{array} \right.
$$
The function is continuous on $ \Gamma$.
The constants $ h_1, h_2 $ can be chosen non-zero ensuring that the orthogonality conditions $ \int_\Gamma \varphi (x) \psi_1 (x) dx = 0 $
is satisfied. The Rayleigh quotient for $ \varphi $ is equal to $ \pi^2$ 
and therefore $\lambda_2\leq \pi^2$. To show that the inequality is strict it is enough to show that $ \varphi $ is not an eigenfunction. Since parallel edges are not allowed.  
at least one of the end points of $ E_1 $ is not equivalent to an end point in $ E_2$. At the vertex not containing an endpoint of $E_2$ the balance condition on the derivatives is not satisfied.
Hence $ \lambda_2 $ is strictly smaller than $ \pi^2$.
\end{proof}

One may get the same result by looking at the M-function.
Consider the limit 
$ \lim_{\lambda \rightarrow\pi^2}\frac{\pi-\sqrt{\lambda }}{\sqrt{\lambda }}\mathbf{Q}(\lambda ), $
which is equal to the matrix with the values $c_{ij}c_{ji}$ outside the diagonal and $\sum_{j\sim i} c_{ij}^2$ on the diagonal. 
The quadratic form of the matrix is given by
$ \sum_{v^i\sim v^j} \left( c_{ij} v_i + c_{ji} v_j \right)^2. $
Since $\Gamma$ is connected this matrix has at most one non-positive eigenvalue. 
Hence all eigenvalues  of $\mathbf{Q}(\lambda )$, except possibly one,
 tend to $\infty$ as $\lambda \rightarrow\pi^2$. 
 The number of energy curves tending to $ + \infty $ as $ \lambda \rightarrow \pi^2-0 $ is at least $ M-1$, where $ M = \# \mbox{vertices}. $
 Hence $ \lambda_{M-1} < \pi^2,$
since at least $ M-1$ energy curves had to cross the axis $ \xi = 0 $ to the left of $ \pi^2$. 

In view of Lemma \ref{lemmaW} to define admissible metric graphs we do not need to require that singularities of
the M-functions are to the right of $ \lambda_2 (\Gamma)$.

\begin{definition}
An equilateral metric graph $ \Gamma $ with weights $\vec{c}^j$ and delta couplings $ \alpha_j $ at the vertices is called
{\bf admissible} if and only if
\begin{itemize}
\item the matrix $ \mathbf Q(\lambda_2(\Gamma)) = \mathbf M (\lambda_2(\Gamma)) - {\rm diag}\, \{ \alpha_j \} $
possesses the Strong Arnold Property.
\end{itemize}
\end{definition}

\begin{definition}
The Colin de Verdi\` ere parameter $ \mu_w (\Gamma) $ for weighted equilateral metric graphs is equal to
the maximal multiplicity of $\lambda_2 (\Gamma) $ for Laplacians with weighted delta couplings at the vertices when one varies
over all admissible equilateral metric graphs associated with the same discrete graph.
\end{definition}
 
\begin{theorem} The Colin de Verdi{\`e}re graph parameters $\mu(G)$ and  $ \mu_w (\Gamma)$ are equal, provided $ \Gamma $ is the 
equilateral metric graph associated with the discrete graph $ G$.
\end{theorem}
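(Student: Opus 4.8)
The plan is to follow the proof of Theorem \ref{ThST} almost verbatim, with the edge weights $c_{ij}=c_{ji}$ now playing the role of the edge lengths $\ell_{ij}$, and with Lemma \ref{lemmaW} making the requirement that $\lambda_2(\Gamma)$ lie below the lowest singularity automatic rather than something to be imposed. As before I would prove the two inequalities $\mu(G)\le\mu_w(\Gamma)$ and $\mu(G)\ge\mu_w(\Gamma)$ separately. All graphs considered are equilateral with $\ell_{ij}=1$, so the lowest singularity of the $M$-function is $\pi^2$, and the secular matrix is $\mathbf Q(\lambda)=\mathbf M(\lambda)-\mathrm{diag}\,\{\alpha_j\}$, whose off-diagonal entries on edges are $c_{ij}^2\,\sqrt\lambda/\sin\sqrt\lambda$ and whose diagonal is $-\sum_{v^m\sim v^j}c_{jm}^2\,\sqrt\lambda\cot\sqrt\lambda-\alpha_j$.

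For $\mu(G)\le\mu_w(\Gamma)$: given a Colin de Verdi\`ere matrix $\mathbf A=(a_{ij})$ realising the maximal multiplicity $\mu$ of $\xi_2=0$, fix any $\lambda_2\in(0,\pi^2)$ (say $\lambda_2=1$). For each edge $v^i\sim v^j$ the equation $c_{ij}^2\,\sqrt{\lambda_2}/\sin\sqrt{\lambda_2}=|a_{ij}|$ has a unique positive solution, since $\sin\sqrt{\lambda_2}>0$ and $c\mapsto c^2\sqrt{\lambda_2}/\sin\sqrt{\lambda_2}$ maps $(0,\infty)$ onto $(0,\infty)$; here, unlike in Theorem \ref{ThST}, no lower bound on $\lambda_2$ is needed. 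With these weights the off-diagonal entries of $-\mathbf Q(\lambda_2)$ match those of $\mathbf A$, and choosing $\alpha_j=a_{jj}-\sum_{v^m\sim v^j}c_{jm}^2\sqrt{\lambda_2}\cot\sqrt{\lambda_2}$ (possible since $\alpha_j$ is an arbitrary real number) makes the diagonals agree as well, so that $-\mathbf Q(\lambda_2)=\mathbf A$. It remains to see that $\lambda_2=\lambda_2(\Gamma)$ and that $\Gamma$ is admissible. By the weighted analog of Lemma \ref{lem2}, together with the strict positivity of $\mathrm{Im}\,\mathbf M$ established in its proof, the eigenvalue curves of $\mathbf Q(\lambda)$ are strictly increasing on $(-\infty,\pi^2)$ and tend to $-\infty$ as $\lambda\to-\infty$; since $\mathbf A$ has exactly one negative eigenvalue and $\xi_2=0$ with multiplicity $\mu$, the matrix $-\mathbf Q(\lambda_2)=\mathbf A$ has exactly one positive eigenvalue and $0$ as its second-largest, which together with simplicity of the ground state forces $\lambda_2=\lambda_2(\Gamma)$ and, via Lemma \ref{lem1}, multiplicity $\mu$. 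Finally $\mathbf Q(\lambda_2)=-\mathbf A$ has the Strong Arnold Property iff $\mathbf A$ does, since $\mathbf Q\mathbf X=0$ is equivalent to $(-\mathbf Q)\mathbf X=0$; hence $\Gamma$ is admissible and $\mu_w(\Gamma)\ge\mu=\mu(G)$.

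For $\mu(G)\ge\mu_w(\Gamma)$: take an admissible equilateral weighted metric graph $\Gamma$ over $G$ maximising the multiplicity of $\lambda_2(\Gamma)$ and put $\mathbf A:=-\mathbf Q(\lambda_2(\Gamma))$. By Lemma \ref{lemmaW} we have $\lambda_2(\Gamma)<\pi^2$, so $\sin\sqrt{\lambda_2(\Gamma)}>0$ and the entries $a_{ij}=-c_{ij}c_{ji}\sqrt{\lambda_2}/\sin\sqrt{\lambda_2}$ are strictly negative for $v^i\sim v^j$ and vanish otherwise --- exactly the sign pattern of a Colin de Verdi\`ere matrix, the diagonal being unconstrained. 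By the discussion preceding Section \ref{SecMG} (applicable here since $\lambda_2(\Gamma)$ lies below the lowest singularity), $\mathbf Q(\lambda_2(\Gamma))$ has one positive eigenvalue and $0$ as its second-largest, so $\mathbf A$ has one negative eigenvalue and $0$ as its second-smallest; $\mathbf A$ has the Strong Arnold Property because $\Gamma$ is admissible; and $\dim\ker\mathbf A=\dim\ker\mathbf Q(\lambda_2(\Gamma))$ equals the multiplicity of $\lambda_2(\Gamma)$ by Lemma \ref{lem1}. Thus $\mathbf A$ is a Colin de Verdi\`ere matrix realising that multiplicity, so $\mu(G)\ge\mu_w(\Gamma)$, and the two bounds give $\mu(G)=\mu_w(\Gamma)$. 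The proof is essentially routine once Theorem \ref{ThST} is available; the only point requiring care --- as already in Theorem \ref{ThST} --- is the verification, in the first half, that the chosen number $\lambda_2$ really is the second eigenvalue of the operator built from it, which rests on the strict monotonicity of the eigenvalue curves of the Herglotz-Nevanlinna matrix $\mathbf Q$ below the first singularity and on the simplicity of the ground state guaranteed by the delta couplings.
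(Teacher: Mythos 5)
Your proof is correct and takes essentially the same route as the paper's: both directions mirror Theorem \ref{ThST}, with Lemma \ref{lemmaW} supplying $\lambda_2<\pi^2$ in place of the singularity condition, and with the weights $c_{ij}$ solving for the off-diagonal entries of $\mathbf A$. The only (cosmetic) difference is that you realise $-\mathbf Q(\lambda_2)=\mathbf A$ at a generic $\lambda_2\in(0,\pi^2)$ by absorbing the diagonal $\cot$ term into $\alpha_j$, whereas the paper fixes $\lambda_2=(\pi/2)^2$ so that $\cot\sqrt{\lambda_2}=0$ and $\alpha_j=a_{jj}$ --- a choice the paper itself remarks is merely a simplification.
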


\begin{proof}
The argument that $ \mu(G)\geq \mu_w (G) $ is almost the same as in the proof of Theorem \ref{ThST}: one needs to take into account that
 $\lambda_2<\pi^2 $ implies that $ \sin \sqrt{\lambda}_2  >0 $ implying that non-diagonal elements of $ -\mathbf Q(\lambda_2) $
 are negative.

 It remains to show that $ \mu(G)\leq \mu_w (G) $.
Let $ \mathbf{A} $ be one of the optimal Colin de Verdi{\`e}re matrices. We are going to construct a metric graph with weighted  delta couplings
so that $ \lambda_2 = (\frac{\pi}{2})^2. $ To this end we choose:
\begin{equation}
c_{ij} = \sqrt{\frac{\pi}{2} \vert a_{ij} \vert} \quad \mbox{and} \quad \alpha_j = a_{jj}.
\end{equation}
Note that we get $ c_{ij} = c_{ji} $ as the matrix $\mathbf{A}$ is real symmetric.
We get that the matrix $ - \mathbf Q((\pi/2)^2) $ coincides with the matrix $ \mathbf{A} $.
It follows that $ (\pi/2)^2 $ is an eigenvalue of multiplicity $ \mu (G)$.
It is not a ground state since  $ \mathbf Q((\pi/2)^2)$ has a positive eigenvalue. 
The lowest singularity of $  \mathbf Q(\lambda) $ is the point $ \lambda = \pi^2$, hence
only one energy curve crosses the line $ \xi = 0 $ to the left of $ (\pi/2)^2$.
It follows that
$ (\pi/2)^2 $ is the second eigenvalue
$ \lambda_2=  (\pi/2)^2. $
$\mathbf Q(\lambda_2) $ has the Strong Arnold Property since $\mathbf{A}$ has it.
The first assertion is proven.
\end{proof}

We decided to use $ \lambda_2 = (\pi/2)^2 $, also any other $ 0 < \lambda < \pi^2 $ would work.
It allowed us to simplify formulas using the fact that $ \cot \pi/2 = 0 $.

\section{Schr\"odinger operators} \label{SecSch}

So far we have only looked at Laplacians which act as $-\frac{d^2}{d x^2}$ on each edge. However it is possible to also look at Schr\"odinger operators which are given by the differential expression
$$L_q = - \frac{d^2}{dx^2}+q(x)$$ 
where $q$ is a real valued function. Here we restrict to the case where $q$ is constant on each edge. We denote by $q_{ij}$ the value 
of $q$ on the edge between $v^i$ and $v^j$.
It is enough to consider
equilateral graphs with standard delta couplings.

The edge M-function for $ L_q$ on the edge $v^i$ to $v^j$ is 
$$ \small \mathbf M_{\bf e} (\lambda) : = \begin{pmatrix} -\sqrt{\lambda-q_{ij} }\cot(\sqrt{\lambda-q_{ij} }) & \frac{\sqrt{\lambda-q_{ij} }}{\sin(\sqrt{\lambda-q_{ij} })} \\ \frac{\sqrt{\lambda-q_{ij} }}{\sin(\sqrt{\lambda-q_{ij} })} & -\sqrt{\lambda-q_{ij} }\cot(\sqrt{\lambda-q_{ij} })\end{pmatrix}.$$
Hence the M-function for $ L_q (\Gamma) $ will be
\begin{equation}
\small \left(\mathbf M (\lambda) \right)_{ij} =
\left\{
\begin{array}{ll}
- \displaystyle \sum_{v^j \sim v^m}  \; \sqrt{\lambda-q_{mj}} \cot \sqrt{\lambda-q_{mj}}, & i =j; \\[3mm]
 \displaystyle \frac{\sqrt{\lambda-q_{ij}}}{\sin \sqrt{\lambda-q_{ij}}} , & v^i \sim v^j; \\[3mm]
\displaystyle  0, &  v^i \not\sim v^j, \; i \neq j; \\[3mm]
 \end{array} \right.
\end{equation}
and the spectrum of  $L_q$ is given by $\mathbf Q(\lambda)=\mathbf M(\lambda) - {\rm diag}\, \{ \alpha_j \}$ for $\lambda\neq q_{ij}+\pi^2n^2$, $n$ positive integer.

This time we may choose $ \lambda = 0 $ and we assume that the Colin de Verdi\`ere matrix $ \mathbf A $ is scaled so that $ | a_{ij} | > 1,$ provided $ v^i \sim v^j. $
Then to get $ -\mathbf Q(0) = \mathbf A $ we need to satisfy the following equation  instead of \eqref{equal}
$$ \small - \frac{\sqrt{-q_{ij}}}{\sin \sqrt{-q_{ij}}}=a_{ij}. $$
This is always possible since the range of the function $ \frac{y}{\sin y} $ on the interval $ (0,\pi) $ covers the interval $ (1, \infty). $
Note that such $ q_{ij} $ always satisfy
$ - \pi^2 < q_{ij} < 0 . $
The singularities of the M-function lie at the points $ \lambda = q_{ij} + \pi^2 n^2, \; n =1,2, \dots ,$ which all are positive. Moreover $ \lambda = 0 $ is the second largest eigenvalue of $
- \mathbf A $, hence $ \lambda = 0 $ is the second (smallest) eigenvalue of $ L_q $.  We get an interpretation of the Colin de Verdi\`ere parameter as the largest multiplicity of the Schr\"odinger 
equation on a equilateral graph with delta couplings at the vertices.

The discussion above may be generalised by considering magnetic Schr\"odinger operators with delta couplings at the vertices given by
the differential expression
$$L_{q,a} =\left(i\frac{d}{dx}+a(x)\right)^2+q(x).$$ 
Variation of the real magnetic potential $ a$ gives another $ N $ parameters equal to the integrals of the magnetic potential along the edges.
This approach leads to the parameter $\nu$ introduced by Colin de Verdi\`ere  in \cite{YC98}.

It is also possible to put all three families together and consider Schr\"odinger operators with not edge-wise constant potentials on
non-equilateral graphs with weighted delta couplings at the vertices. Despite much larger number of parameters, this family does  lead to 
the same maximal multiplicity
 of the second eigenvalue.

 \section*{Acknowledgements}
 
 We would like to thank the anonymous referee not only  for pointing few corrections improving the presentation, but for suggesting
 to study further the Strong Arnold Property in the context of metric graphs. Our results on this subject are added to Section 4.

 \section*{Data availability statement}
 
 Data sets generated during the current study are available from the corresponding author on reasonable request.
 
 \section*{Ethics Declarations}
 
 Our institution does not require ethics approval for reporting individual cases or case
series. The Authors declare that there is no conflict of interest.
This work was partially supported by the Swedish Research Council Grants 2020-03780 and 2024-04650.

\end{document}